\newtheorem{thm}{Theorem}[section]
\newtheorem{prop}[thm]{Proposition}
\newtheorem{notation}[thm]{Notation}
\newtheorem{lem}[thm]{Lemma}
\newtheorem{cor}[thm]{Corollary}
\newtheorem{question}[thm]{Question}
\newcommand{\Text}[1]{\text{\textnormal{#1}}}
\theoremstyle{remark}
\newtheorem{remark}[thm]{Remark}
\newtheorem{ex}[thm]{Example}
\theoremstyle{definition}
\newtheorem{definition}[thm]{Definition}
\begin{document}

\title{Pseudo-Anosov maps with small stretch factors on punctured surfaces}
\author{Mehdi Yazdi}
\date{}
\thanks{Partially supported by NSF Grants DMS-1006553 and DMS-1607374.}

\maketitle

\begin{abstract}
Consider the problem of estimating the minimum entropy of pseudo-Anosov maps on a surface of genus $g$ with $n$ punctures. We determine the behaviour of this minimum number for a certain large subset of the $(g,n)$ plane, up to a multiplicative constant. In particular it has been shown that for fixed $n$, this minimum value behaves as $\frac{1}{g}$, proving what Penner speculated in 1991. 
\end{abstract}

\section{Introduction}
Let $S = S_{g,n}$ be the surface of genus $g$ with $n$ punctures. The \emph{mapping class group} of $S$, $\Text{Mod}(S_{g,n})$, is the group of orientation-preserving homeomorphisms of $S$ up to isotopy, where the punctures are preserved set-wise. By the Nielsen-Thurston classification \cite{nielsen1942abbildungsklassen, nielsen1944surface,  thurston1988geometry}, every mapping class has a representative that is either \emph{periodic}, \emph{reducible}, or \emph{pseudo-Anosov}. A homeomorphism $f \colon S \rightarrow S$ is
pseudo-Anosov if there exists a pair of transverse measured singular foliations $(\mathcal{F}^+, \mu^+)$ and $(\mathcal{F}^-, \mu^-)$ and a number $\lambda >1$ such that 
\[f(\mathcal{F}^+, \mu^+) = (\mathcal{F}^+, \lambda \mu^+), \hspace{3mm}f(\mathcal{F}^-, \mu^-) = (\mathcal{F}^-, \lambda^{-1} \mu^-).  \]

The number $\lambda$ is called the \emph{stretch factor} or \emph{dilatation} of $f$, and the foliations $\mathcal{F}^+$ and $\mathcal{F}^-$ are the \emph{unstable} and \emph{stable foliations} respectively. See \cite{casson1988automorphisms} or \cite{fathi1979travaux}\footnote{An English translation by Kim and Margalit is available in \cite{fathi2012thurston}.} for a careful exposition of the Nielsen-Thurston classification. The class of pseudo-Anosov elements is the most interesting class from the point of view of hyperbolic 3-manifolds, since Thurston proved that $f \in \Text{Mod}(S_{g,n})$ is pseudo-Anosov if and only if the mapping torus of $f$ admits a complete hyperbolic metric \cite{thurston1998hyperbolic,otal2001hyperbolization}. Therefore, an intriguing question is to determine the minimum `complexity' between all pseudo-Anosov elements in the mapping class group of $S$. A natural measure of complexity for a topological map is the \emph{topological entropy}. When $f$ is a pseudo-Anosov map, the entropy is equal to $\log(\lambda)$, where $\lambda$ is the stretch factor of $f$ \cite[Proposition 10.13]{fathi1979travaux}. This justifies considering the quantity
\[ l_{g,n} = \min \{ \hspace{1mm} \log(\lambda(f)) \hspace{1mm} |  f \in \Text{Mod}(S_{g,n}) \hspace{2mm} \Text{is pseudo-Anosov} \}, \]
where the minimum exists by \cite{arnoux1981construction}. From a different perspective, $l_{g,n}$ is the length of the shortest geodesic (systole) of the \emph{moduli space} of $S_{g,n}$ equipped with the \emph{Teichm\"uller metric} \cite{fathi1979travaux}. 
Penner initiated the study of $l_{g,n}$ in his seminal work \cite{penner1991bounds}. He showed that there are explicit positive constants $A_1$ and $A_2$ such that for any $g \geq 2$
\[ \frac{A_1}{g} \leq l_{g,0} \leq \frac{A_2}{g}. \]
In other words, $l_{g,0}$ behaves like $\frac{1}{g}$ for $g \geq 2$. 

After Penner, there has been many works aiming to make the constants $A_1, A_2$ more precise \cite{bauer1992upper, hironaka2006family, minakawa2006examples,  hironaka2014small}, to find the exact value of $l_{g,n}$ for small values of $g$ and $n$ \cite{song2002entropies,ham2007minimum, cho2008minimal, lanneau2011minimumbraids, lanneau2011minimum}, or to find the asymptotic of least stretch factor when restricted to certain subgroups or subsets of the mapping class group \cite{song2005upper, farb2008lower, boissy2012pseudo,  hironaka2014penner, agol2016pseudo, loving2019least}. See also \cite{mcmullen2015entropy, liechti2016minimal, malestein2016pseudo, liechti2018minimal, liechti2018nonorientable, yazdi2018lower} for other related research.

Penner speculated that there should be an analogous upper bound in the case $n \neq 0$  \cite{penner1991bounds}. This is known to be true for $1 \leq n \leq 4$ by adjusting Penner's examples of pseudo-Anosov maps \cite{tsai2009asymptotic}. Tasi proposed studying the behaviour of the function $l_{g,n}$ along different rays in the $(g,n)$ plane \cite{tsai2009asymptotic}. With this perspective, Penner's question is the study of $l_{g,n}$ along the ray $n=$ constant. 

At first sight, it might seem to the expert reader that one should be able to use Penner's examples for $S_{g,0}$ to construct examples for $S_{g,n}$, by taking one of the singularities of the stable foliation to be a puncture. The issue is that, in that case one has to assume that roughly $g$ of those singularities are punctures since the singularities are permuted by the map (see Section \ref{Penner-small-dilatation} on Penner's examples). This should make it clear that fixing the number of punctures while having larger genera can not be obtained easily. Our first theorem gives a positive answer to what Penner speculated.

\begin{thm}
\label{thm:main}
For any fixed $n \in \mathbb{N}$, there are positive constants $B_1 = B_1(n)$ and $B_2=B_2(n)$ such that for any $g \geq 2$
\[ \frac{B_1}{g} \leq l_{g,n} \leq \frac{B_2}{g}. \] 
\end{thm}

See Theorem \ref{thm1} for a more quantitative result. Note that the constants $B_1$ and $B_2$ depend on $n$. Therefore we propose the study of $l_{g,n}$ as a function of two variables $g$ and $n$. Valdivia \cite{valdivia2012sequences} proved that for any fixed $r \in \mathbb{Q}^{>0}$, there are positive constants $C_1=C_1(r)$ and $C_2=C_2(r)$ such that for any $(g,n)$ with $g=rn$ we have
\[  \frac{C_1}{g} \leq l_{g,n} \leq \frac{C_2}{g}.  \]
However, it is not even clear whether the constants $C_1$ and $C_2$ depend continuously on $r$ or not. A first guess might be that $l_{g,n}$ is comparable to $\frac{1}{|\chi(S)|}$. However this can not be true, since Tsai \cite{tsai2009asymptotic} proved that for any fixed $g \geq 2$, there are positive constants $D_1=D_1(g)$ and $D_2=D_2(g)$ such that 
\[ D_1 \cdot  \frac{\log(n)}{n} \leq l_{g,n} \leq D_2 \cdot  \frac{\log(n)}{n}.\]
The different behaviour of $l_{g,n}$ along the rays $g= \Text{constant}$ and $n = \Text{constant}$ suggests that the function $l_{g,n}$ might be complicated, even up to multiplicative constants (see \cite{yazdi2018lower}). We determine the behaviour of $l_{g,n}$ on a certain large subset of the $(g,n)$ plane. In particular this region contains balls of arbitrary large radii.

\begin{figure}
\labellist
\pinlabel $g$ at -5 120
\pinlabel $n$ at 120 -5
\pinlabel $2$ at -5 35
\endlabellist
\centering
\includegraphics[width= 1.5 in]{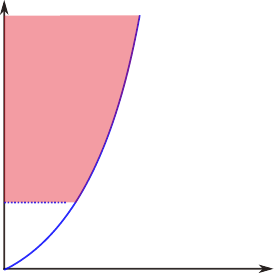}
\caption{The region $ g \geq C \hspace{1mm} n\log^2(n)$}
\label{region}
\end{figure}

\begin{thm}
\label{thm:uniform}
There exist positive constants $A,B$ and $C$ such that for any $n \geq 1$ and $g \geq C \hspace{1mm} n \log^2(n)$ we have
\[ \frac{B}{g} \leq l_{g,n} \leq \frac{A}{g}. \]
\end{thm}

The point of this theorem is that the constants $A,B$ and $C$ depend on neither $g$ nor $n$, in contrast to the previous results. We should note that the lower bounds in Theorems \ref{thm:main} and \ref{thm:uniform} are direct corollaries of Penner's lower bound and our contribution is to provide upper bounds that have the same `order of magnitude' as the existing lower bound. This has been done by constructing examples of pseudo-Anosov maps with `small' stretch factors.

\subsection{Acknowledgement}
Part of this work was carried out while the author was a PhD student at Princeton University. I would like to thank my advisor, Professor David Gabai, for stimulating conversations. The author gratefully acknowledges the support by a Glasstone Research Fellowship.

\section{Background}

\subsection{Perron-Frobenius matrices} \hfill\\
A matrix $A$ is called \textit{Perron-Frobenius} if it has non-negative entries and there is some $k \in \mathbb{N}$ such that all entries of $A^k$ are strictly positive. The Perron-Frobenius theorem states that such a matrix has a unique largest (in absolute value) eigenvalue $\lambda = \rho(A)$. Moreover $\lambda$ is a positive real number and has a positive real eigenvector \cite{gantmacher2005applications}. In this case, the eigenvector corresponding to $\lambda$ is unique up to scaling since $\lambda$ is a simple root of the characteristic polynomial of $A$ . In this article we only work with matrices with integer entries.

Given a non-negative integral matrix $A$, the \emph{adjacency graph} $\Gamma$ of $A$ is defined as follows. If $A = (a_{i,j})$ is $n \times n$, then $\Gamma$ has $n$ vertices $v_i$. Moreover, there are $a_{ij}$ oriented edges from $v_i$ to $v_j$. When $A$ is Perron-Frobenius, $\Gamma$ is path-connected by oriented paths. The following is well known.
\begin{lem}
Let $A$ be a Perron-Frobenius matrix. Denote by $R_i$ the sum of the entries in the $i$-th row of $A$. Then we have
\[\min_{i} \{ R_i \} \leq \rho(A) \leq \max_{i} \{ R_i \} \] 
\end{lem}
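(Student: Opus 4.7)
The plan is to extract a positive eigenvector from the Perron-Frobenius theorem quoted just above and test it at its extremal coordinates. Let $v=(v_1, \ldots, v_n)$ be the strictly positive eigenvector with $Av = \rho(A)\, v$ provided by the theorem. Reading off the $i$-th component of this eigenvalue equation yields
\[ \rho(A)\, v_i \;=\; \sum_{j} a_{ij}\, v_j, \]
so a judicious choice of $i$ will convert the sum on the right into a one-sided comparison with the row sum $R_i$.

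For the upper bound, I would pick $k$ with $v_k = \max_i v_i$ and bound the right-hand side above:
\[ \sum_j a_{kj} v_j \;\le\; v_k \sum_j a_{kj} \;=\; v_k R_k \;\le\; v_k \max_i R_i. \]
Dividing through by the positive quantity $v_k$ gives $\rho(A) \le \max_i R_i$. For the lower bound, I would symmetrically pick $l$ with $v_l = \min_i v_i$ and use the reverse bound $\sum_j a_{lj} v_j \ge v_l R_l \ge v_l \min_i R_i$; dividing by $v_l > 0$ then yields $\rho(A) \ge \min_i R_i$.

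The only point requiring any care, and it is a minor one, is that the Perron eigenvector must have strictly positive entries so that dividing by $v_k$ and $v_l$ is legitimate. This is exactly what the Perron-Frobenius theorem recalled just above the lemma guarantees, so no real obstacle arises. An alternative but essentially equivalent route would be to observe that $\max_i R_i$ is precisely the operator norm $\|A\|_\infty$ and $\min_i R_i$ is a corresponding lower gauge on nonnegative vectors, combined with the general fact that $\rho(A) \le \|A\|$ for any submultiplicative norm; however, the extremal-coordinate argument above seems the most direct and keeps everything self-contained within the hypotheses already introduced.
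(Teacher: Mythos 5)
Your proof is correct and matches the paper's argument exactly: both extract the positive Perron eigenvector, evaluate the eigenvalue equation at the coordinate where $v$ attains its maximum (resp.\ minimum), and divide by that strictly positive entry to obtain the upper (resp.\ lower) bound by the extremal row sums.
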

\noindent When $A$ is integral as well, the $(i,j)$ entry of the matrix $A^k$ is equal to the number of oriented paths of length $k$ from $v_i$ to $v_j$ in the adjacency graph of $A$. Therefore, using the above lemma for the matrix $A^k$, we deduce that 
\[ \rho(A)^k = \rho(A^k) \leq \max_{i} \hspace{1mm} \{ \Text{the number of oriented paths of length } k \Text{ starting from } v_i \}. \]

\begin{notation} 
Let $\Gamma$ be a directed graph with vertex set $V(\Gamma)$. For $v \in V(\Gamma)$, define $v^+$ to be the set of vertices $u$ such that there is an oriented edge from $v$ to $u$.
\end{notation}
The following technical lemma will be used in the proof of Theorem \ref{thm1} for bounding the spectral radius of certain matrices coming from pseudo-Anosov maps.

\begin{lem}
Let $A$ be a non-negative integral matrix, $\Gamma$ be the adjacency graph of $A$, and $V(\Gamma)$ be the set of vertices of $\Gamma$. Let $D$ and $k$ be fixed natural numbers. Assume the following conditions hold for $\Gamma$: 
\begin{enumerate}
\item  For each $v \in V(\Gamma)$ we have $\deg_{\Text{out}}(v) \leq D$, where $\deg_{\Text{out}}(v)$ denotes the outgoing degree of $v$. 

\item There is a partition $V(\Gamma) = V_1 \cup ... \cup V_k$ such that for each $v \in V_i$ we have $v^+ \subset V_{i+1}$, for any $1 \leq i \leq k$ except possibly when $i =1$ or $3$ (indices are$\mod k$). 

\item For each $v \in V_1$ we have $v^+ \subset V_2 \cup V_3$.

\item For each $v \in V_3$ we have $v^+ \subset V_3 \cup V_4$, and for $u \in v^+ \cap V_3$ we have $u^+ \subset V_4$.

\item For all $3 < j \leq k$ and each $v \in V_j$, the set $v^+$ consists of a single element.\\
\end{enumerate}
Then the spectral radius of $A^{k-1}$ is at most $\hspace{1mm} 4  D^4$.
\label{estimate}
\end{lem}
\begin{proof}
It is enough to show that for any vertex $v$, the number of directed paths of length $k-1$ and starting from $v$ is at most $4D^4$. Fix the vertex $v \in V$. We prove the lemma under the assumption that $v \in V_1$ as the other cases are similar. Suppose that starting at $v$, we want to construct a path of length $k-1$ by adding edges one by one. At each step we move from some $V_i$ to $V_{i+1}$ except when we are in $V_1$ or $V_3$. If we are in $V_1$ then we either move to $V_2$ or $V_3$. If we are in $V_3$ then we either stay in $V_3$ or move to $V_4$; however if we stay in $V_3$ then at the next step we definitely move to $V_4$. 

As $v \in V_1$, the path has one of the shapes in Figure \ref{Dynkin}. As an example, the diagram on the top means that the path starts at $V_1$ then it moves to $V_2$, then to $V_3$, it stays in $V_3$ at the next step, then moves to $V_4$ and then it moves to the next $V_i$ at each step until it gets to $V_{k-1}$, which is the ending point. 

Fix one of the diagrams in Figure \ref{Dynkin} for the shape of the path say the one on the top, and recall that the path is starting at $v$, and count the number of possible paths with the given property. At each step,
we have a unique choice of continuing the path one step further -- except when we are in one of $V_1 , V_2$ or $ V_3$. While being in $V_1 , V_2$ or $ V_3$, we have at most $D$ choices. Therefore, the number of possible paths is at most $1 \times D \times D \times D \times D\times1 \times \dots \times 1 = D^4$. Similarly the number of paths in the other three diagrams is at most $D^4$. Hence, the total number of paths is at most $4D^4$. This finishes the case that $v$ lies in $V_1$. 

Similarly, we can draw diagrams for the case when $v$ lies in each $V_j$ for $1<j \leq k$. Again there are at most four diagrams for such a path and the number of paths corresponding to each diagram is at most $D^4$. Therefore, the upper bound $4D^4$ for the number of paths holds in general no matter where the starting point lies.

\begin{figure}

\labellist
\pinlabel $1$ at 5 112
\pinlabel $2$ at 35 112
\pinlabel $3$ at 65 112
\pinlabel $3$ at 95 112
\pinlabel $4$ at 125 112
\pinlabel $\dots$ at 147 112
\pinlabel $k-1$ at 177 112

\pinlabel $1$ at 5 80 
\pinlabel $2$ at 35 80
\pinlabel $3$ at 65 80
\pinlabel $4$ at 95 80
\pinlabel $\dots$ at 120 80
\pinlabel $k$ at 147 80

\pinlabel $1$ at 5 48 
\pinlabel $3$ at 35 48
\pinlabel $3$ at 65 48
\pinlabel $4$ at 95 48
\pinlabel $\dots$ at 120 48
\pinlabel $k$ at 147 48

\pinlabel $1$ at 5 16 
\pinlabel $3$ at 35 16
\pinlabel $4$ at 65 16
\pinlabel $\dots$ at 90 16
\pinlabel $k$ at 112 16
\pinlabel $1$ at 144 16
\endlabellist

\centering
\includegraphics[width= 1.5 in]{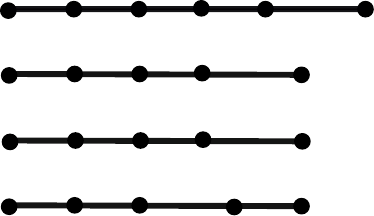}
\caption{}
\label{Dynkin}

\end{figure}

\end{proof}

\subsection{Thurston norm and fibered faces}\ \\
Thurston defined a natural semi-norm on the second relative homology of compact orientable 3-manifolds with real coefficients, now called the Thurston norm. We briefly discuss it here; the reader can see \cite{thurston1986norm} for a comprehensive treatment. Define the complexity of a compact, connected and oriented surface $S$ as 
\[\chi_-(S)=\max\{ -\chi(S),0 \}. \] 
If $S$ has multiple components, its complexity is defined as sum of the complexities of its components. Let $M$ be a compact, oriented 3-manifold. For $a \in H_2(M, \partial M; \mathbb{Z})$ the \emph{Thurston norm} of $a$, $x(a)$, is defined as:
\[ x(a):= \min \{ \chi_-(S) \hspace{1mm}|\hspace{1mm} [S]=a, \hspace{2mm} S \hspace{1mm} \Text{is compact, properly embedded, and oriented} \}. \]
The norm can be extended to rational points by linearity, and to real points by continuity. It was shown by Thurston that the unit ball of this norm is a convex polyhedron. Hence it makes sense to talk about faces of the Thurston norm.

From now on we work with closed 3-manifolds, and hence the Thurston norm is defined on $H_2(M ; \mathbb{R})$. We are interested in different ways that a single fibered 3-manifold $M$ can fiber over the circle. If $b_1(M) \geq 2$ and $M$ fibers over the circle then $M$ fibers in infinitely many different ways. Moreover, Thurston gave a structural picture of different ways that $M$ fibers in terms of its Thurston norm.
\begin{thm}[Thurston] Let $\mathcal{F}$ be the set of homology classes $\in H_2(M)$ that are representable by fibers of fibrations of $M$ over the circle.
\begin{enumerate}
\item Elements of $\mathcal{F}$ are in one-to-one correspondence with (non-zero) lattice points inside some union of cones on open faces of Thurston norm. 

\item If a surface $F$ is transverse to the suspension flow associated to some fibration of $M \longrightarrow S^1$ then $[F]$ lies inside the closure of the corresponding cone in $H_2(M)$.
\end{enumerate}
\label{thurston}
\end{thm}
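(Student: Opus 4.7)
The plan is to analyze the set of cohomology classes dual to fibers via Poincar\'e duality. A fibration $\pi \co M \to S^1$ is equivalent to a primitive class $u \in H^1(M;\mathbb{Z})$ represented by a nowhere-zero closed $1$-form $\omega = \pi^*(d\theta)$; under Poincar\'e duality this corresponds to the fiber class $[F] \in H_2(M;\mathbb{Z})$. Let $\mathcal{O} \subset H^1(M;\mathbb{R})$ denote the set of classes realized by some nonsingular closed $1$-form. The first thing I would establish is that $\mathcal{O}$ is an open cone: openness is the stability statement that if $\omega$ is nowhere-zero then any $C^0$-small closed perturbation is nowhere-zero, so nearby rational cohomology classes are again fibration classes; being a cone is immediate by rescaling $\omega$; and convex combinations of nonsingular closed $1$-forms that share a common transverse vector field (which one arranges after perturbation, provided the forms lie in the same component of $\mathcal{O}$) remain nonsingular.

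Second, I would prove that a fiber $F$ minimizes the Thurston norm in its homology class, i.e.\ $x([F]) = \max\{-\chi(F),0\}$. Given a norm-minimizing representative $S$ of $[F]$, one puts $S$ in general position with respect to the suspension flow $\phi_t$ of $\omega$ and uses standard cut-and-paste (innermost-disk / outermost-arc) moves to eliminate tangencies. Once $S$ is transverse to $\phi_t$, the flow projects $S$ onto $F$ by a map of degree $1$, so $|\chi(S)| \geq |\chi(F)|$, forcing equality.

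Third, I would show that the Thurston norm is linear on each component of $\mathcal{O}$. If $\omega_1,\omega_2$ represent fibrations with fibers $F_1,F_2$ in the same component of $\mathcal{O}$, then for positive integers $p,q$ the combination $p\omega_1 + q\omega_2$ is again nonsingular by the convexity argument, hence represents a fibration whose fiber $F$ satisfies $|\chi(F)| = p|\chi(F_1)| + q|\chi(F_2)|$; combined with Step 2 this gives $x(pu_1+qu_2) = px(u_1)+qx(u_2)$. A seminorm whose unit ball is a rational polyhedron (Thurston's separate polytope theorem, which I would cite) and which is linear on an open cone must have that cone be the cone over an open top-dimensional face, proving (1).

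Finally, for (2), suppose $F$ is transverse to the suspension flow $\phi_t$ of a fibration $\pi$ with cohomology class $u_\pi$. Transversality means every closed orbit $\gamma$ of $\phi_t$, and more generally every $\phi_t$-invariant probability measure, pairs non-negatively with the Poincar\'e dual $u_F$. One then uses the characterization of the open fibered cone (which falls out of part (1)) as $\{u : u \cdot \mu > 0 \text{ for every } \phi_t\text{-invariant measure } \mu\}$ to conclude that $u_F$ lies in the closure of the fibered cone containing $u_\pi$, which is the desired statement. The hardest step is the norm-minimality of fibers in Step 2, since the transversality reduction and the Euler-characteristic accounting carry essentially all of the genuine geometric content of the theorem; openness and linearity, by contrast, are essentially formal once the right convexity argument for combining nonsingular closed forms is in place.
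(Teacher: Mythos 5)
A preliminary remark: the paper states this as background, citing Thurston \cite{thurston1986norm}, and gives no proof; so I am comparing your sketch against Thurston's own argument. Your high-level architecture is the standard one --- nonsingular closed $1$-forms form an open cone by stability, fibers are norm-minimizing, the norm is linear on that cone, and a polyhedral norm linear on an open cone sits over an open top-dimensional face --- but the decisive tool in Thurston's proof, the Euler class $e\in H^2(M)$ of the plane field $\ker\omega$ (equivalently, of a plane field transverse to a chosen transverse vector field), is entirely absent, and its absence leaves real gaps. Thurston proves the inequality $x(a)\geq |\langle e,a\rangle|$ for all $a\in H_2(M)$, combines it with the elementary identity $\langle e,[F]\rangle=\chi(F)$ for any surface $F$ transverse to the flow, and thereby obtains \emph{both} norm-minimality of fibers \emph{and} the linearity of $x$ on the cone in a single stroke, since $\langle e,\cdot\rangle$ is linear. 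Part (2) is then immediate: for $F$ transverse to the flow, $x([F])\leq|\chi(F)|=|\langle e,[F]\rangle|\leq x([F])$, so $[F]$ lies on the closed face $\{x=-\langle e,\cdot\rangle\}$, which is the closure of the fibered cone.

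Your substitutes for the Euler class break down at exactly the points you flag as delicate. For norm-minimality you propose to isotope a norm-minimizing surface $S$ to be transverse to the suspension flow by ``innermost-disk / outermost-arc moves,'' then project by degree one. But a generic incompressible surface cannot be made transverse to a nonsingular flow by an isotopy: Roussarie--Thurston general position only yields transversality away from finitely many saddle tangencies, and Poincar\'e--Hopf then forces exactly $|\chi(S)|$ of them when $\chi(S)<0$; there is no complexity-nonincreasing surgery that removes them, and any attempt to cut them away changes $[S]$ or increases $|\chi(S)|$. (Indeed, if your transversality step worked, the degree-one covering argument would show $S$ \emph{diffeomorphic} to $F$, which cannot hold for an arbitrary norm-minimizing representative.) Similarly, for linearity you assert $|\chi(F)| = p|\chi(F_1)| + q|\chi(F_2)|$ for the fiber of $p\omega_1+q\omega_2$, but the oriented-sum construction together with the triangle inequality only gives the $\leq$ direction; the $\geq$ direction is precisely the Euler-class inequality in disguise (via a common transverse vector field $X$ and the Euler class of $X^{\perp}$). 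Finally, for part (2) you appeal to the characterization of the fibered cone by positivity against flow-invariant measures; that characterization is correct and appealing, but it is Fried's theorem, a substantial separate result, and does not ``fall out of part (1).'' Thurston's own proof of (2) is the Euler-class identity above, which is much shorter once $x\geq|\langle e,\cdot\rangle|$ is in hand.
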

\noindent The faces mentioned in Part (1) of the theorem are called $\textit{fibered faces}$.

\subsection{Bounds on entropy in a fibered cone} \hfill\\
Let $M$ be a closed, fibered 3-manifold with $b_1(M) \geq 2$ and $\mathcal{C}$ be a fibered cone of $H_2(M;\mathbb{R})$. The following theorem of Fried and Matsumoto extends the entropy function from lattice points to the entire fibered cone, and discusses the properties of the extension \cite{fried1982flow,fried1983transitive, matsumoto1987topological}. 
\begin{thm}[Fried-Matsumoto]
There is a strictly convex function $h \colon \mathcal{C} \longrightarrow \mathbb{R}$ such that:
\begin{enumerate}
\item for all $t >0$ and $u \in \mathcal{C}$ we have $h(tu) = (\frac{1}{t}) h(u)$.
\item for every primitive integral class $u \in \mathcal{C} \cap H_2(M)$, $h(u)$ is equal to the entropy of the monodromy corresponding to $u$.
\item $h(u) \longrightarrow \infty$ when $u \longrightarrow \partial \mathcal{C}$.
\end{enumerate}
\label{Fried}
\end{thm}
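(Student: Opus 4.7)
The plan is to exhibit $h$ as the logarithm of the largest real root of a one-parameter family of polynomials (McMullen's Teichm\"uller polynomial specialized along $u$) and to derive strict convexity from log-convexity properties of Perron roots. First I would fix a primitive integral class $u_0 \in \mathcal{C}$ represented by a fiber $F_0$ with pseudo-Anosov monodromy $\phi_0$, together with an invariant train track $\tau$ for $\phi_0$ whose transition matrix is $A_0$. The key structural ingredient, due to Fried and made explicit by McMullen, is that every primitive integral $u \in \mathcal{C}$ is representable by a fiber $F_u$ which is a cross-section of the suspension flow of $\phi_0$, and whose first-return map is pseudo-Anosov with stretch factor equal to the largest real root of a specialization of the \emph{Teichm\"uller polynomial} $\Theta_F \in \mathbb{Z}[G]$ of the fibered face $F$ containing $u_0$, where $G := H_1(M;\mathbb{Z})/\mathrm{torsion}$.

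\noindent\textbf{Defining $h$.} Writing $\Theta_F = \sum_{g \in G} a_g \cdot g$, for each $u \in \mathcal{C}$ I would define the one-variable specialization $p_u(x) := \sum_g a_g\, x^{u(g)}$ and let $\lambda(u)$ be its largest real root. Set $h(u) := \log \lambda(u)$. Homogeneity (property 1) follows from the substitution identity $p_{tu}(x) = p_u(x^t)$, which gives $\lambda(tu) = \lambda(u)^{1/t}$, hence $h(tu) = h(u)/t$. Property (2) is McMullen's identification of $\lambda(u)$ with the pseudo-Anosov stretch factor for primitive integral $u$. Real-analytic dependence of $\lambda$ on $u$, hence continuity of $h$, follows from simplicity of the Perron-Frobenius eigenvalue together with the implicit function theorem.

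\noindent\textbf{Strict convexity and boundary blow-up.} For strict convexity I would appeal to the log-convexity theorem for Perron roots (Kingman, Friedland): the transition matrix $A(u)$ governing the first-return map to $F_u$ has entries that are log-linear in $u$ (positive monomials $c \cdot x^{\langle u, \alpha \rangle}$), so $\log \rho(A(u)) = \log \lambda(u)$ is log-convex, and strict under the irreducibility guaranteed by $\tau$ being an invariant train track of a pseudo-Anosov map. For property (3) the idea is that as $u$ approaches a boundary face of $\mathcal{C}$, some branch of $\tau$ acquires a divergent weight in $A(u)$ -- equivalently, the cross-section $F_u$ loses transversality to the flow -- forcing $\lambda(u) \to \infty$. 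The step I expect to be the main obstacle is this boundary analysis: one has to pin down precisely which exponents $u(g)$ in $\Theta_F$ blow up at each face of $\partial \mathcal{C}$ and translate this combinatorial statement into a uniform estimate $h(u) \to \infty$. This passes through the Newton polytope of $\Theta_F$ and is the most delicate combinatorial ingredient in both Fried's and McMullen's original treatments.
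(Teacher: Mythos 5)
The paper does not prove this statement---it is quoted as background, attributed to Fried with citations to \cite{fried1982flow,fried1983transitive}---so there is no in-paper argument to compare against; what follows assesses your sketch on its own terms. Your proposal reconstructs the result via McMullen's Teichm\"uller polynomial \cite{mcmullen2000polynomial}, a genuinely different and historically later route than Fried's. Fried's argument is dynamical: he realizes integral classes in the fibered cone as cross-sections of the pseudo-Anosov suspension flow, applies a Markov partition of the flow together with Abramov's formula for the entropy of a reparametrized flow, and derives homogeneity, convexity, and blow-up at $\partial\mathcal{C}$ from the variational principle and thermodynamic formalism. McMullen's polynomial packages the same cross-section data algebraically, making $\lambda(u)$ the Perron root of an explicit Laurent specialization; this buys concreteness and a clean Newton-polytope picture, but it rests on machinery (fibered-face theory, the module of transversals to the invariant lamination) that itself invokes Fried, so you should be alert to circularity if the goal is a from-scratch proof. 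Two concrete cautions on the sketch itself: Kingman/Friedland give \emph{convexity} of $u\mapsto\log\rho(A(u))$ (not ``log-convexity of the log''), and upgrading to \emph{strict} convexity on the open cone requires the Newton polytope of $\Theta_F$ to be full-dimensional dual to the fibered face---irreducibility of a single transition matrix $A_0$ does not by itself give this. Likewise, the boundary blow-up $h(u)\to\infty$ is precisely the duality between that Newton polytope and the face; you correctly flag this as the delicate step, but it is left unfilled, and it is not a formal corollary of the polynomial specialization.
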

 \noindent Fried proved that the function is convex and Matsumoto showed that it is strictly convex. We will need the following \cite[Lemma 3.11]{agol2016pseudo}.
\begin{prop}[Agol-Leininger-Margalit]
Let $\mathcal{C}$ be a fibered cone for a mapping torus $M$, and $\overline{\mathcal{C}}$ be its closure in $H_2(M ; \mathbb{Z})$. If $u \in \mathcal{C}$ and $v \in \overline{\mathcal{C}}$, then $h(u+v) < h(u)$. 
\label{agol}
\end{prop}

\subsection{Jacobsthal function} \hfill\\
Let $n$ be a natural number. Every sequence of $n$ consecutive integers contains an element $a$ with $\gcd (a,n)=1$, where $\gcd$ stands for the greatest common divisor. Jacobsthal asked the following question \cite{jacobsthal1961sequenzen}: given $n$, what is the smallest integer $j(n)$ such that any sequence of $j(n)$ consecutive integers contains an element relatively prime to $n$? Therefore $j(n) \leq n$ by the later comment. Jacobsthal conjectured that 
\[ j(n) = O\Big ((\frac{\log(n)}{\log(\log(n))})^2 \Big ). \]
The best known upper bound is due to Iwaniec  \cite{iwaniec1978problem} who proved that 
\[ j(n) = O \big ( \log^2(n)\big ). \]

\begin{cor}
There exists a number $K \geq 2$ such that for every $n \geq 2$, each of the intervals 
\[ [\log^2(n), K \log^2(n)) , \hspace{2mm} [K \log^2(n), 2K \log^2(n)) , \hspace{2mm} [2K \log^2(n), 3K \log^2(n)), \hspace{2mm} \dots  \]
contains a number that is relatively prime to $n$.
\label{coprime}
\end{cor}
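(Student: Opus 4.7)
My plan is to reduce the claim directly to Iwaniec's bound $j(n) \leq C_0 \log(n)^2$. The defining property of the Jacobsthal function says that any window of at least $j(n)$ consecutive integers contains an integer coprime to $n$, so it is enough to show that every interval in the displayed family contains at least $j(n)$ integers. The first step is to fix a constant $C_0$ with $j(n) \leq C_0 \log(n)^2$ for every $n \geq 2$: Iwaniec's theorem supplies such a constant for all sufficiently large $n$, and the trivial bound $j(n) \leq n$ together with the finiteness of the exceptional small range lets me enlarge $C_0$ to cover those too.

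Next I would count integers in each listed interval. A half-open real interval of length $L$ contains at least $L-1$ integers, so the first interval $[\log(n)^2, K\log(n)^2)$ contains at least $(K-1)\log(n)^2 - 1$ integers, while each subsequent interval $[kK\log(n)^2, (k+1)K\log(n)^2)$ contains at least $K\log(n)^2 - 1$. The first interval is the shorter one, so it is enough to pick $K$ so that
\[ (K-1)\log(n)^2 - 1 \;\geq\; C_0 \log(n)^2 \]
holds uniformly in $n \geq 2$. Since $\log(n)^2 \geq (\log 2)^2 > 0$ on that range, this reduces to a single linear inequality in $K$, and any $K$ exceeding $C_0 + 1 + 1/(\log 2)^2$ will do.

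I do not anticipate a genuine obstacle here; the corollary is essentially a quantitative repackaging of Iwaniec's estimate. The only care required is the bookkeeping converting ``interval of length at least $j(n)$'' into ``interval contains at least $j(n)$ consecutive integers'' via the $-1$ above, and noticing that the first listed interval is slightly shorter than the rest and therefore furnishes the binding constraint on the choice of $K$.
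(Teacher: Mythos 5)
Your argument is correct, and it is exactly the intended (if unwritten) derivation from Iwaniec's bound: the paper states Corollary \ref{coprime} with no proof, treating it as an immediate consequence of $j(n) = O(\log(n)^2)$, and you have simply supplied the routine details. The two points you flag as needing care are indeed the only ones: (i) converting a length bound into a count of consecutive integers costs exactly the $-1$ you include, and (ii) the first listed interval is shorter by $\log(n)^2$ than the others and is therefore the binding one. Your handling of small $n$ via the trivial bound $j(n)\leq n$ and the uniform lower bound $\log(n)^2 \geq (\log 2)^2$ for $n\geq 2$ closes the argument cleanly, so any $K \geq C_0 + 1 + 1/(\log 2)^2$ works.
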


\subsection{Penner's construction of pseudo-Anosov maps}\hfil\\ 
Let $S$ be an orientable surface of genus $g$ with $n$ punctures. A \textit{multi curve} on $S$ is a union of distinct (up to isotopy) and disjoint simple closed curves on $S$. Let $\alpha = \alpha_1 \cup \dots \cup \alpha_m$ and $\beta = \beta_1 \cup \dots \cup \beta_n$ be two multi curves on $S$ such that $\alpha \cup \beta$ fills the surface, meaning that every component of $S -\alpha \cup \beta$ is either a disk or a once punctured disk (when $S$ is a punctured surface). Let $\tau_{\alpha_i}$ be the positive (right handed) \emph{Dehn twist} along $\alpha_i$. Define $\tau_{\beta_j}$ similarly.
 
Penner's theorem states that any word in $\tau_{\alpha_i}$ and $\tau_{\beta_j}^{-1}$ is pseudo-Anosov provided that all $\tau_{\alpha_i}$ and $\tau_{\beta_j}$ are used at least once \cite{penner1988construction}. Note that we are doing positive Dehn twist along the curves in one collection (multi curve) and negative twists along the other one. Furthermore, an \emph{invariant train track} for such a map can be obtained in the following way (independent of the chosen word): for each intersection of $\alpha$ with $\beta$, smooth the intersection as in Figure \ref{smoothcurves}. 

\begin{figure}
\labellist
\pinlabel $\alpha$ at 75 45 
\pinlabel $\beta$ at 48 70
\endlabellist
\centering
\includegraphics[width= 2 in]{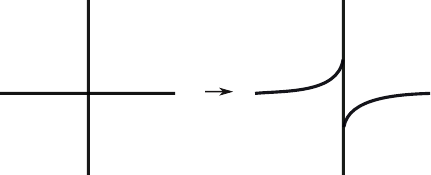}
\caption{Smoothing the intersections between multi curves}
\label{smoothcurves}
\end{figure}

\begin{ex}
Let $S$ be a closed, orientable surface of genus two, and the multi curves $\alpha = \alpha_1 \cup \alpha_2 \cup \alpha_3$ and $\beta = \beta_1 \cup \beta_2$ be as in Figure \ref{pennercurvesexample}. It can be easily seen that $\alpha \cup \beta$ fills the surface. By Penner's theorem, the map $f = \tau_{\alpha_1}^2 \circ \tau_{\alpha_2} \circ \tau_{\beta_2}^{-3} \circ \tau_{\alpha_3} \circ \tau_{\beta_1}^{-1} \circ \tau_{\alpha_1}$ is pseudo-Anosov. Moreover, an invariant train track for $f$ is shown in Figure \ref{pennercurvesexample}.
\end{ex}

\begin{figure}
\labellist
\pinlabel $\alpha_1$ at 17 115 
\pinlabel $\alpha_2$ at  100 115
\pinlabel $\alpha_3$ at 177 115
\pinlabel $\beta_1$ at 45 130
\pinlabel $\beta_2$ at 150 130
\endlabellist
\centering
\includegraphics[width= 2 in]{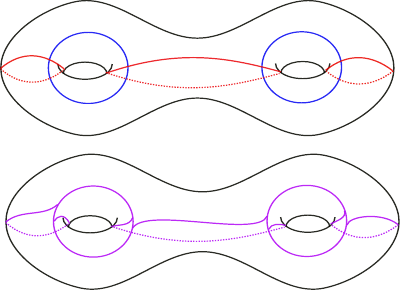}
\caption{Top: two multi curves that together fill the surface, Bottom: an invariant train track}
\label{pennercurvesexample}
\end{figure}

\label{penner-construction}

\subsection{Penner's examples of small dilatation pseudo-Anosov maps}In this section, we explain Penner's construction of pseudo-Anosov maps with small stretch factors. Imagine the surface $S_{g,0}$ as a sphere with $g$ symmetrically placed handles, and let $\rho$ be its natural $g$-fold symmetry (Figure \ref{penner-example}, right). Let $a,b$ and $c$ be the curves shown in Figure \ref{penner-example}, and denote the right handed Dehn twists along them by $\tau_a, \tau_b$ and $\tau_c$. Consider the map
\[ \phi : = \rho \circ \tau_c \circ \tau_a ^{-1} \circ \tau_b.  \]
It follows from Section \ref{penner-construction} that the map $\phi^g$ is pseudo-Anosov and an invariant train track $\tau$ for $\phi^g$ can be constructed by Penner's recipe. Let $V$ be the vector space of transverse measures on $\tau$. The map $\phi^g$ acts on $V$ and the spectral radius for this action is equal to the stretch factor of $\phi^g$. We can also explicitly compute the action of $\phi^g$ on $V$ (technically on an invariant subspace of $V$) and estimate, from above, the spectral radius of the action. In fact this upper bound can be chosen to be a constant independent of $g$, and Penner gave the upper bound of $11$. One the other hand, if we denote the stretch factor of $\phi$ by $\lambda(\phi)$ then
\[ \lambda(\phi^g)=\lambda(\phi)^g. \]
Hence we have
\[ \lambda(\phi)^g \leq 11 \implies \log(\lambda) \leq \frac{\log(11)}{g}, \]
which gives the upper bound. 

\begin{figure}
\labellist
\pinlabel $p$ at 53 77 
\pinlabel $q$ at 60 55
\pinlabel $a$ at 150 124
\pinlabel $b$ at 136 139
\pinlabel $c$ at 164 72
\pinlabel $\rho$ at 200 107

\endlabellist
\centering
\includegraphics[width= 2.5 in]{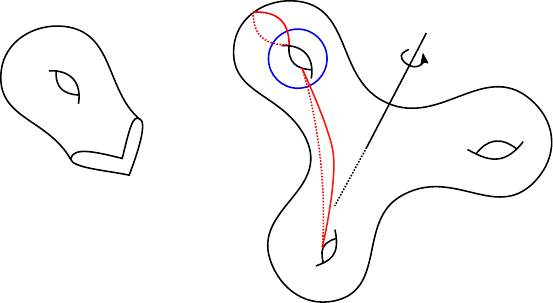}
\caption{Penner's examples of small dilatation pseudo-Anosov maps}
\label{penner-example}
\end{figure}

We can equivalently think about Penner's example as follows. Let $F$ be a surface of genus one with one boundary component and two marked points $p$ and $q$ on its boundary. One should think about $F$ as a fundamental domain for the rotational action of $\rho$ on $S_{g,0}$ (Figure \ref{penner-example}, left). The marked points can be thought of as the intersections of $S_{g,0}$ with the axis of the rotation $\rho$. Let $\alpha$ and $\beta$ be the oriented arcs on $\partial F$ going from $p$ to $q$. For each $i \in \mathbb{Z}$, let $F_i$ be a copy of $F$. Consider the infinite surface $S_\infty$ obtained by taking the union of $F_i$ and gluing them together
\[ S_\infty := (\cup F_i)/\sim, \]
where the relation $\sim$ identifies $\alpha_i$ with $\beta_{i+1}$ for each $i \in \mathbb{Z}$. 

There is a shift action $\tilde{\rho}$ on $S_\infty$ that sends each $F_i$ to $F_{i+1}$. It is clear that the quotient of the surface $S_\infty$ by the subgroup $\langle(\tilde{\rho})^g \rangle$ is homeomorphic to $S_{g,0}$. Moreover, if we define the curves $\tilde{a}$, $ \tilde{b}$ and $\tilde{c}$ inside $\big( (F_0 \cup F_1)/\sim \big) \hspace{1mm} \subset S_\infty$, then they project to the curves $a$, $b$ and $c$ under the projection $S_\infty \longrightarrow S_{g,0}$. 

Had we assumed in the beginning that one (respectively both) of the points $p$ and $q$ is a puncture, then we would have obtained a similar construction with the projection $S_\infty \longrightarrow S_{g,1}$ (respectively $S_\infty \longrightarrow S_{g,2}$).
This perspective is useful for us, since we are going to adjust Penner's construction by considering actions of the group $\mathbb{Z} \times \mathbb{Z}$ on a suitable infinite surface (generated by two independent and commuting shifts).  
\label{Penner-small-dilatation}
\section{Construction of the maps}

\begin{thm}
There is a universal constant $C>0$ such that for all $n \geq 1$ and $g \geq 17n+1$, we have the following inequality
\[ l_{g,n} \leq C \hspace{1mm} \frac{n}{g}. \]
\label{thm1}
\end{thm}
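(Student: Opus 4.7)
The plan follows the two-step strategy sketched in the introduction. In the first step, for each $n \ge 1$ and each $j \ge 0$ I would construct a pseudo-Anosov $f_{n,j}$ on $S_{G_{n,j},n}$ with $G_{n,j}:=17n+1+jd$ (for a fixed step $d$ independent of $n$) by specializing Penner's construction. The multi-curves $\alpha,\beta$ would be assembled from $n$ identical ``puncture blocks'' contributing the $n$ marked points and a baseline genus $\sim 17n$, together with $j$ identical ``genus-raising blocks'' contributing no punctures. Any word in $\tau_{a_i}$ and $\tau_{b_j}^{-1}$ using each twist at least once is pseudo-Anosov by Penner's theorem; fixing such a word, smoothing $\alpha\cap\beta$ yields an invariant train track whose transition matrix $A$ satisfies $\lambda(f_{n,j})=\rho(A)$. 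Applying the row-sum Lemma to a suitable power $A^k$ (via its path-counting interpretation as counting length-$k$ paths in the adjacency graph) gives
\[
\log\lambda(f_{n,j}) \;\le\; \frac{C_1\,n}{G_{n,j}}.
\]

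In the second step, I would exhibit inside the closed mapping torus $M_{n,j}$ of $f_{n,j}$ an oriented, embedded, closed, norm-minimizing genus-two surface $F_{n,j}$ whose homology class lies in the closure $\overline{\mathcal{C}_{n,j}}$ of the fibered cone $\mathcal{C}_{n,j}$ containing $[f_{n,j}]$, and which has zero intersection (in homology) with the suspension link of the $n$ marked points. Such an $F_{n,j}$ should arise by collapsing all but one of the blocks in the fiber to a simpler piece and sweeping the surviving piece out under the suspension flow; the puncture-link-disjointness ensures that modifying the fibration along $F_{n,j}$ does not change the number of marked points on the new fiber.

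In the third step, for each $0 \le k \le d-1$ the class $u_k:=[f_{n,j}]+k[F_{n,j}]\in\mathcal{C}_{n,j}$ has Thurston norm $x(u_k)=(2G_{n,j}-2+n)+2k=2(G_{n,j}+k)-2+n$, so $u_k$ is represented by a fibration with fiber $S_{G_{n,j}+k,n}$ and pseudo-Anosov monodromy $\phi_{n,j,k}$. Iterating the Agol--Leininger--Margalit Lemma yields
\[
\log\lambda(\phi_{n,j,k}) = h(u_k) \;\le\; h(u_0) = \log\lambda(f_{n,j}) \;\le\; \frac{C_1\,n}{G_{n,j}}.
\]
Since $G_{n,j}+k \ge G_{n,j} \ge 17n+1$, the ratio $(G_{n,j}+k)/G_{n,j}$ is at most $1+(d-1)/(17n+1)$, which is bounded by an absolute constant. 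Hence $\log\lambda(\phi_{n,j,k})\le C\,n/(G_{n,j}+k) = Cn/g$. Letting $j$ and $k$ vary, $g=G_{n,j}+k$ covers every integer $\ge 17n+1$, which proves the theorem.

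\textbf{Main obstacle.} The hardest part is the interplay between Steps 1 and 2: the Penner multi-curves and the chosen word must be designed so that simultaneously (a) the row-sum/path-counting estimate delivers the bound $C_1 n/G_{n,j}$ with $C_1$ independent of $n$ and $j$, and (b) the mapping torus contains a norm-minimizing genus-two surface lying in $\overline{\mathcal{C}_{n,j}}$ (verifying both norm-minimality and membership in the correct fibered cone, not some neighboring face). The constant $17$ in the hypothesis $g\ge 17n+1$ presumably encodes the intersection combinatorics of a single ``puncture block'' tuned to make both (a) and (b) hold.
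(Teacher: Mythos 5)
Your three-step skeleton (arithmetic progression of genera via Penner's construction, a genus-two surface in the closure of the fibered cone, then Agol--Leininger--Margalit to interpolate) matches the paper's plan. But Step~1 as you have set it up has a genuine gap, and it is not a gap that can be patched by ``tuning the blocks.''

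You fix an arithmetic progression $G_{n,j}=17n+1+jd$ with step $d$ \emph{independent of $n$}, built from $n$ ``puncture blocks'' and $j$ ``genus-raising blocks,'' and you assert that the path-counting/row-sum estimate gives $\log\lambda(f_{n,j})\le C_1 n/G_{n,j}$. There is no reason this should hold for such a progression. In the paper the genera are $g_k=(6k-1)n+1$, so the step between consecutive terms is $6n$ --- it grows with $n$ --- and this is forced by the mechanism that produces the bound. The surface is built as $n$ ``layers,'' each layer joined to the next by $k$ tubes, and the map is a composition of twists supported in a bounded region followed by a rotation $\rho_1$ of order $k$. After one pass through the twisting region, a branch of the train track is simply rotated for $\sim k$ steps before it can re-enter the twisting region. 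So the adjacency graph has no short cycles (length $\gtrsim k$), the out-degree is bounded by a universal $D$, and Lemma~\ref{estimate} yields $\rho(A)^{k-1}\le 4D^4$, i.e.\ $\log\lambda\lesssim 1/k$. Since $g\asymp kn$, this is $\asymp n/g$. With your $G_{n,j}$, the analogous ``cycle-length'' parameter is $j$ (you rotate among the $j$ genus-raising blocks), so the best you can hope for is $\log\lambda\lesssim 1/j$. But $1/j=d/(G_{n,j}-17n-1)$, which for $j=1$ is a constant rather than $\asymp n/G_{n,j}\asymp 1/17$ only by coincidence, and more importantly does not compare to $n/g$ uniformly: you would need $j\asymp g/n$, i.e.\ $d\asymp n$, contradicting your choice of $d$ independent of $n$. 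The progression step being proportional to $n$ is not a cosmetic detail, it is exactly what makes the exponent in the path-counting estimate match the quantity $n/g$.

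A second, subtler difference: the paper does not build Penner maps directly on punctured surfaces. It builds $f$ on the \emph{closed} surface $S_{g}$ so that $f$ commutes with an order-$n$ rotation $\rho_2$ whose axis meets $S$ in $2n$ points, all singularities of the invariant foliations fixed by $f$. The genus-two surface $F$ is constructed inside the closed mapping torus, isotoped off these $2n$ fixed flow lines, and the interpolating fibers $[S]+r[F]$ still meet those flow lines in $2n$ fixed points each; only at the very end does one puncture at $n$ of them. Your plan works with punctured surfaces and a ``suspension link'' from the outset, which forces you into the Thurston-norm theory for manifolds with boundary and requires you to control the algebraic intersection of $F$ with the link --- this is workable but strictly harder than the paper's route, and you have not indicated how the Penner multi-curves would simultaneously produce the fixed marked points and the needed symmetry. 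In short: the interpolation step (your Step~3) is correct and is the same as the paper's, but Step~1 needs the step of the progression to scale like $n$ and needs the closed-surface-with-rotational-symmetry device, and Step~2 needs the explicit curve $\gamma$ with $\gamma-\phi(\gamma)$ bounding a genus-one piece; none of these are produced by the block decomposition you describe.
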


\begin{proof} \ \\
\textbf{Outline:}
First, we construct `small dilatation' maps using Penner's construction, on a sequence of surfaces $P_{n,k}$ where $n \geq 1$ and $k \geq 3$. Denote these maps as 
\[ f_{n,k}: P_{n,k} \longrightarrow P_{n,k}. \] 
The surface $P_{n,k}$ has exactly $n$ punctures. Let $g_{n,k}$ be the genus of the surface $P_{n,k}$. Therefore the set 
\[ \mathcal{W}_n :=\{ g_{n,k} \hspace{1mm}|\hspace{1mm}  k \geq 3 \}, \] 
indicates the set of genera obtained from this construction for exactly $n$ punctures. If the set $\mathcal{W}_n$ had contained all natural numbers larger than $17n$, we would have been done; however this is not the case. We need to fill the remaining natural numbers that are missing from the set $\mathcal{W}_n$ by a different construction. Let $M_{n,k}$ be the mapping torus of $f_{n,k}$. Denote by 
\[ \mathcal{C}_{n,k} \subset H_2(M_{n,k}; \mathbb{R}), \]
the Thurston fibered face corresponding to the monodromy map $f_{n,k}$. The following property is crucial for us: 

There exists a closed orientable surface $F_{n,k}$ of genus two in the closure of the fibered face $\mathcal{C}_{n,k}$. 

We look at the fibered face $\mathcal{C}_{n,k}$ and use the surface $F_{n,k}$ to construct new fibrations of the manifold $M_{n,k}$. The new fibrations give us the missing numbers between $g_{n,k}$ and $g_{n, k+1}$; here the hypothesis that $F_{n,k}$ has genus two will be used.

Another important property is that the ratio $\frac{g_{n,k+1}}{g_{n,k}}$ is bounded above by a universal constant independent of $g$ and $k$. This will be used in proving that the new constructed maps that give the missing genera, have `small' stretch factors as well.\\ 

\textbf{Step 1}: Define the surface $P_{n,k}$ in the following way. Let $T$ be an orientable surface of genus $5$ with $3$ boundary components $c$, $ d$ and $e$. We give $c$, $d$ and $e$ the induced orientations from $T$. Let $p$ (respectively $q$) be a puncture (respectively a marked point) on the boundary component $e$ of $T$. Let $r$ and $s$ be the two oriented arcs connecting $p$ and $q$ in $\partial T$, with the induced orientations from $e$. See Figure \ref{fundamental-region} for a picture of $T$. Let $T_{i,j}$ be copies of the surface $T$, where $i,j \in \mathbb{Z}$. We use similar notations to refer to the boundary components of $T_{i,j}$. Define the infinite surface $S_{\infty}$ as the quotient
\[ S_\infty := \big( \bigcup T_{i,j} \big)/\sim, \]
where $i,j \in \mathbb{Z}$. The equivalence relation $\sim$ is defined as
\[ c_{i,j} \sim d_{i+1,j} \hspace{3mm}, \hspace{3mm} r_{i,j} \sim s_{i,j+1}, \]
where $i,j \in\mathbb{Z}$, and the gluing maps for $c_{i,j} \sim d_{i+1,j}$ and $r_{i,j} \sim s_{i,j+1}$ are by orientation-reversing homeomorphisms.

\begin{figure}
\labellist
\pinlabel $p$ at 95 175
\pinlabel $q$ at 95 191
\pinlabel $r$ at 120 165
\pinlabel $s$ at 70 166
\pinlabel $c$ at 8 120
\pinlabel $d$ at 80 10 
\endlabellist

\includegraphics[width= 2 in]{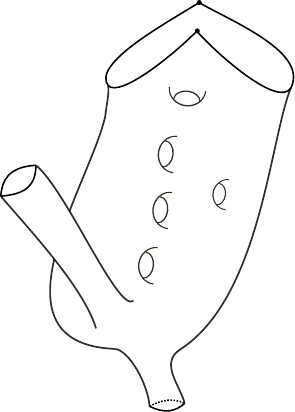}
\caption{The surface $T$, which is the building block.}
\label{fundamental-region}
\end{figure}

There are two natural maps $\overline{\rho_1}, \overline{\rho_2} : S_\infty \longrightarrow S_\infty$ that act by shifts as follows 
\[ \overline{\rho_1} \text{ sends } T_{i,j} \text{ to }T_{i+1, j}, \]
\[ \overline{\rho_2} \text{ sends } T_{i,j} \text{ to } T_{i,j+1}. \]
Note that the maps $\overline{\rho_1}$ and $\overline{\rho_2}$ commute. Define the surface $P_{n,k}$ as the quotient of the surface $S_\infty$ by the covering action of the group generated by $(\overline{\rho_1})^n$ and $ (\overline{\rho_2})^k$. Therefore, $\overline{\rho_1}$ and $\overline{\rho_2}$ induce maps on the surface $P_{n,k}$, which we denote by $\rho_1$ and $\rho_2$. Here we have a slight abuse of notation by suppressing the indices $n$ and $k$ for the maps $\rho_1$ and $\rho_2$.
\begin{lem}
Define the sequence 
\begin{equation}
g_{n,k} = (6k-1)n+1, \hspace{5mm}k \geq 3, \hspace{2mm} n \geq 1.
\end{equation}
The genus of $P_{n,k}$ is equal to $g_{n,k}$. 
\end{lem}

\begin{proof}
Consider the subsurface $U \subset P_{n,k}$ defined as
\[ U = \big( \bigcup_{i=0}^{k-1} T_{0,i} \big) / \sim. \]
Then $U$ is a compact, orientable surface of genus $5k$ with $2k$ boundary components, and forms a fundamental domain for the covering action of $\overline{\rho_1}$ on $S_\infty$. We have 
\[ \chi(U) = 2- 2(5k)-2k=2-12k. \]
Moreover 
\[ \chi(P_{n,k}) = n \cdot \chi(U) = -2n(6k-1), \]
since $P_{n,k}$ is formed by gluing $n$ copies of $U$ together along circle boundary components which have zero Euler characteristic. Therefore 
\[ \chi(P_{n,k}) = 2-2g_{n,k}=  -2n(6k-1) \implies g_{n,k} = n(6k-1)+1.\] 
\end{proof}

\begin{figure}
\labellist
\pinlabel $\beta_1$ at 100 94
\pinlabel $\beta_5$ at 114 100
\pinlabel $\alpha_4$ at 87 94
\pinlabel $\alpha_1$ at 59 87
\pinlabel $\beta_6$ at 54 102
\pinlabel $\gamma$ at 68 122
\pinlabel $\beta_2$ at 106 164
\pinlabel $\alpha_2$ at 81 160 
\pinlabel $\alpha_3$ at 121 135
\pinlabel $\beta_4$ at 109 128
\pinlabel $\beta_3$ at 36 72
\pinlabel $\alpha_5$ at 20 95
\pinlabel $\alpha_6$ at 106 79
\endlabellist
\centering
\includegraphics[width= 3 in]{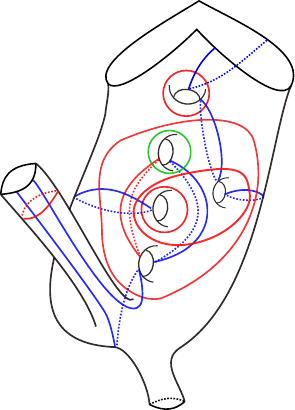}
\caption{The portion of $\alpha$ and $\beta$ curves that lie on $T_{0,0}$.}
\label{curves}
\end{figure}

\begin{figure}
\labellist
\pinlabel $\beta_3$ at 72 35
\endlabellist
\centering
\includegraphics[width= 2 in]{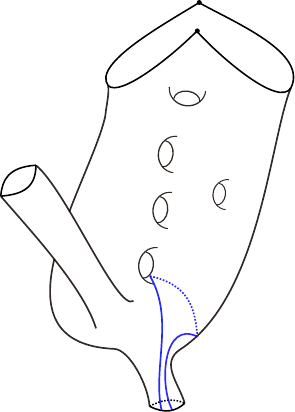}
\caption{The rest of the curve $\beta_3$ on $T_{1,0}$}
\label{curves2}
\end{figure}

\begin{figure}
\labellist
\pinlabel $\beta_2$ at 72 155
\endlabellist
\centering
\includegraphics[width= 2 in]{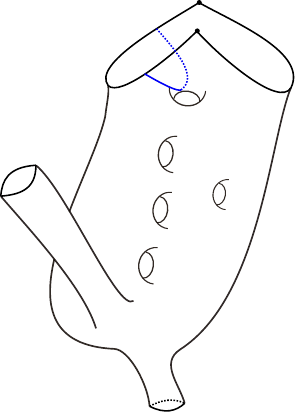}
\caption{The rest of the curve $\beta_2$ on $T_{0,1}$}
\label{curves3}
\end{figure}

\textbf{Step 2}: The map $f_{n,k}: P_{n,k} \longrightarrow P_{n,k}$ will be defined as a composition of suitable Dehn twists, followed by a finite order mapping class. Let us specify the curves along which we do the Dehn twists. 

Let $\mathcal{B}$ be the union of all $\beta$ curves except $\beta_1$ in $T_{0,0} \cup T_{0,1} \cup T_{1,0}$ (see Figures \ref{curves}, \ref{curves2} and \ref{curves3}). Let $\rho_1(\mathcal{B})$ be the image of $\mathcal{B}$ under $\rho_1$; we use the notation $\rho_1^\ell(\mathcal{B})$ for $\ell \in \mathbb{N}$ similarly. Define $\phi _{b}$ as the composition of positive Dehn twists along all the curves in the set $\overline{\mathcal{B}} : = \mathcal{B} \cup \rho_1(\mathcal{B}) \cup \dots \cup \rho_1^{n-1}(\mathcal{B})$. Since the curves in $\overline{\mathcal{B}}$ are disjoint, Dehn twists along them commute. Therefore, it is not necessary to specify the order in which we compose these Dehn twists in $\phi_b$. Let $\mathcal{R}$ be the union of all $\alpha$ curves except $\alpha_1$ in $T_{0,0}$. Define $\overline{\mathcal{R}}$ and $\phi_r$ similarly but use negative Dehn twists this time. 

Let $\alpha_1 , \beta_1 \subset T_{0,0}$ be the curves in Figure \ref{curves}. Let $\phi$ be the composition of negative Dehn twists along all the curves $\alpha_1 , \rho_1(\alpha_1) \cdots, \rho_1^{n-1}(\alpha_1)$ followed by positive Dehn twists along all the curves $\beta_1 , \rho_1(\beta_1) , \cdots, \rho_1^{n-1}(\beta_1)$. Define
\begin{equation}
f_{n,k} := \rho_2 \circ \phi \circ \phi _{b} \circ \phi_{r}. 
\end{equation}

Here in our notation the composition is from right to left. It follows from Penner's construction of pseudo-Anosov maps that $(f_{n,k})^k$ is pseudo-Anosov. Hence $f_{n,k}$ itself is pseudo-Anosov and an invariant train track $\tau_{n,k}$ for $f_{n,k}$ can be obtained from Penner's construction by smoothing the intersection points. \\

\textbf{Step 3}: Let $M_{n,k}$ be the mapping torus of $f_{n,k}$ and $\mathcal{C}_{n,k}$ be the fibered face of $H_2(M_{n,k} , \mathbb{R})$ corresponding to the map $f_{n,k}$. We show that the closure of $\mathcal{C}_{n,k}$ contains a closed orientable surface of genus two. 

\begin{lem}
There is a non-trivial homology class $0 \neq [F_{n,k}] \in H_2(M_{n,k}; \mathbb{Z})$ that is represented by an orientable surface $F_{n,k}$ of genus two. Moreover, $F_{n,k}$ is Thurston norm-minimizing and lies in the closure $ \overline{\mathcal{C}_{n,k}}$.
\label{genustwosurface}
\end{lem}

\begin{proof}
To simplify the notation, we drop the subscripts $n$, $k$ from $M$, $ F$ and $\mathcal{C}$ in the proof. Let $\gamma \subset T_{0,0}$ be the curve as shown in Figure \ref{curves}. We use $\gamma$ to construct the surface $F$. Let us follow the image of $\gamma$ under the iterations of $f$. Let $\hat{\gamma} \subset T_{0,0}$ be the curve shown in Figure \ref{genustwo}. It is easy to check that $ \hat{\gamma} = \phi(\gamma)$. Hence we have
\begin{align*}
f(\gamma) &=  \rho_2 \circ \phi \circ \phi _{b} \circ \phi_{r} (\gamma) =\rho_2 \circ \phi(\gamma) = \rho_2(\hat{\gamma})\\
f^2(\gamma) &= \rho_2^2(\hat{\gamma}) \\
& \vdots \\
f^k(\gamma) &= \rho_2^k(\hat{\gamma}) = \hat{\gamma}
\end{align*}  
Note that $\gamma - \hat{\gamma}$ bounds an orientable surface $\hat{F}$ of genus one (see Figure \ref{genustwo}). Therefore one can assemble $\gamma$ , $f(\gamma)$, \dots, $f^k(\gamma)$ together and obtain the desired surface as follows: Let $T_i$ be a tube that connects $f^{i-1}(\gamma)$ to $f^{i}(\gamma)$ in the mapping torus $M$, for $1 \leq i \leq k$. The tube $T_i$ is obtained by following the curve $f^{i-1}(\gamma)$ along the suspension flow of $f$. Define $F$ as the union of $T_1 , T_2 , \dots , T_k$ and $\hat{F}$. Since $T_i$ are tubes and $\hat{F}$ has genus one, the resulting surface is a closed surface of genus two. It can be easily seen that $F$ is embedded and orientable. 

We show that the surface $F$ can be isotoped to be transverse to the suspension flow of $f$, and therefore $[F] \in \overline{\mathcal{C}}$ by Theorem \ref{thurston}. The proof is as in \cite[Lemma 5.1]{leininger2012number}. Let $N(\gamma)$ be a tubular neighbourhood of $\gamma$ in $\hat{F}$, and $\eta \colon \hat{F} \rightarrow [0,1]$ be a smooth function supported on $N(\gamma)$ with $\eta^{-1}(1)=\gamma$ and such that the derivative of $\eta$ on $\gamma$ vanishes. Denote the suspension flow of the map $f$ by $\Phi_t \colon M \rightarrow M$, where $t \in \mathbb{R}$, and define the map $g \colon \hat{F} \rightarrow M$ as $g(x) = \Phi_{k \cdot\eta(x)}(x)$. Then $g$ restricted to the interior of $\hat{F}$ is an embedding, and satisfies $g(\gamma) = \hat{\gamma}$. The image of $g \colon \hat{F} \rightarrow M$ is an embedded surface of genus two, that is isotopic to the natural embedding of $F$ in $M$, and is transverse to the suspension flow.

The surface $F$ is norm-minimizing, otherwise if $K$ is a surface representing the homology class $[F]$  and $\chi_-(K) < \chi_-(F)$ then $K$ should contain an essential torus or sphere. But there can not be any essential torus or sphere in $M$ since $f$ is pseudo-Anosov. More precisely, the universal cover of $M$ is homeomorphic to $\mathbb{R}^3$ and is irreducible by Alexander's theorem \cite[Theorem 9.2.10]{martelli2016introduction}, hence so is $M$ \cite[Proposition 9.2.16]{martelli2016introduction}. See \cite[Proposition 14.9]{fathi1979travaux} for a proof of $M$ being atoroidal.

\begin{figure}
\labellist
\pinlabel $\gamma$ at 58 105
\pinlabel $\hat{\gamma}$ at 55 80
\endlabellist
\centering
\includegraphics[width= 1.5 in]{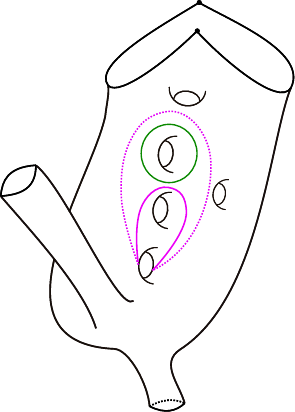}
\caption{The curves $\gamma, \hat{\gamma} \subset T_{0,0}$ together bound an orientable surface of genus one.}
\label{genustwo}
\end{figure}

\end{proof}

\textbf{Step 4:}
\begin{lem}
Let $\lambda_{n,k}$ be the stretch factor of the map $f_{n,k}$. There is a universal constant $C'>0$ such that for every $n \geq 1$ and $k \geq 3$ we have
\[ \lambda_{n,k} \leq  C' \frac{n}{g_{n,k}}. \]
\label{stretch}
\end{lem}

\begin{proof}
Let $\tau_{n,k}$ be the invariant train track for $f_{n,k}$ obtained from Penner's construction. We drop the subscripts from $\tau_{n,k}$ for convenience. Define the multi curves
\[ \mathcal{A} := \mathcal{B} \cup \mathcal{R} \cup \{ \alpha_1 , \beta_1 \}, \hspace{3mm} \overline{\mathcal{A}} := \mathcal{A} \cup \rho_1(\mathcal{A}) \cup \dots \cup \rho_1^{n-1}(\mathcal{A}) \]
\[ \hat{\mathcal{A}} := \overline{\mathcal{A}} \cup \rho_2(\overline{\mathcal{A}}) \cup \dots \cup \rho_2^{k-1}(\overline{\mathcal{A}}).\] 
For each connected curve $x \subset \hat{\mathcal{A}}$, there is an associated transverse measure $\mu_{x}$ for $\tau$. By definition, $\mu_x$ assigns $1$ to all edges that lie in $x$ and assigns $0$ to every other edge. Let $V_{\tau}$ be the cone of transverse measures on $\tau$, and $H$ be the subspace of $V_{\tau}$ spanned by the elements
\[ \{ \mu_x \hspace{1mm} | \hspace{1mm} x \Text{ is a connected curve in } \hat{\mathcal{A}} \}. \] 
It is easy to see that $\mu_x$ are indeed linearly independent, and we refer to this basis for $H$ as the \emph{standard basis}. The subspace $H$ is invariant under the action of $f_{n,k}$ on $V_\tau$, and $\lambda_{n,k}$ is equal to the Perron-Frobenius eigenvalue of the action of $f_{n,k}$ on $H$. 

Let $A$ be the matrix representing the linear action of $f_{n,k}$ on $H$ in the standard basis. Denote by $\Gamma$ the adjacency graph corresponding to $A$. We want to show that $A$ has `small' spectral radius. The intuition is that $\Gamma$ is `sparse' since `most' connected curves in $\hat{\mathcal{A}}$ just get rotated by the action of $f_{n,k}$, and $\Gamma$ has no cycle of `small' length. More precisely, we have the following partition
\[ \hat{\mathcal{A}}= \bigcup_{i=1}^k \rho_2^{i-2}(\overline{\mathcal{A}}). \]
Define $V_i$ for $1 \leq i \leq k$ as those vertices of $\Gamma$ that correspond to the elements 
\[ \{ \mu_y \hspace{1mm} | \hspace{1mm} y \text{ is a connected curve in } \rho_2^{i-2}(\overline{\mathcal{A}}) \} .\]
We check the conditions of  Lemma \ref{estimate}, based on the combinatorics of the curves in $\hat{\mathcal{A}}$.
\begin{enumerate}
\item There exists a universal constant $D'$, independent of $k$ and $ n$, such that for every connected curve $c$ in $\hat{\mathcal{A}}$, the geometric intersection number between $c$ and $\bar{\mathcal{A}}$ is at most $D'$. We can write 
\[ A = M_4 M_3 M_2 M_1, \]
where $M_4, M_3, M_2$ and $M_1$ show the action of $\rho_2, \phi, \phi_b$ and $\phi_r$ on $H$ respectively. The $L^1$-norm of $A(\mu_x)$ for a connected curve $x \in \hat{\mathcal{A}}$ is at most $D = (1+D')^3$. This is because each of the matrices $M_1, M_2$ and $M_3$ change the $L^1$-norm by a factor of at most $1+D'$, and $M_4$ preserves the $L^1$-norm. This shows that the outward degree of each vertex in the adjacency graph is at most $D$. 

\item  Let $v \in V_i$ be a vertex corresponding to $\mu_c$ for a curve $c \in \hat{\mathcal{A}}$, where $i \neq 1,3$. Note $f_{n,k}$ is defined as $f_{n,k} = \rho_2 \circ \phi \circ \phi _{b} \circ \phi_{r} $. Then the action of $\phi \circ \phi _{b} \circ \phi_{r}$  sends $\mu_c$ to a sum of $\mu_y$ where $y$ corresponds to elements of $V_i$. Moreover, $\rho_2$ sends $\mu_y$ to $\mu_z$ where $z$ corresponds to an element of $V_{i+1}$.

\item The only elements $v \in V_1$ such that $v^+ \not \subset V_2$ are the ones corresponding to 
\[  X = \{ \mu_y \hspace{1mm} |\hspace{1mm}  \exists i \text{ s.t. } y \text{ is a connected curve in }  \rho_1^i(\rho_{2}^{-1}(\beta_2)) \}. \]
For any element $v \in V_1$ corresponding to $X$, we have $v^+ \subset V_2 \cup V_3$.  

\item The only elements $v \in V_3$ such that $v^+ \not \subset V_4$ are the ones corresponding to
\[  Y = \{ \mu_y \hspace{1mm} |\hspace{1mm}  \exists i \text{ s.t. } y \text{ is a connected curve in }  \rho_1^i(\rho_2(\alpha_2)) \} .\]
Moreover, for any element $v \in V_3$ corresponding to $Y$ and any $u \in v^+ \cap V_3$, the element $u$ does not correspond to $Y$ any more, and hence $u^+ \subset V_4$.

\item Every curve corresponding to an element of $V_j$, $3<j \leq k$, is disjoint from all curves in $\overline{\mathcal{A}}$. Therefore it just gets rotated by $\rho_2$. 
\end{enumerate}
Setting $\lambda= \lambda_{n,k}$, Lemma \ref{estimate} implies that 
\[ \lambda^{k-1} = \rho(A)^{k-1} = \rho(A^{k-1}) \leq 4D^4 \implies (k-1) \cdot \log(\lambda) \leq  \log(4D^4) \]
\[ \implies \frac{k}{2} \log(\lambda) \leq (k-1)\log(\lambda) \leq \log(4D^4). \]
On the other hand $g_{n,k} = (6k-1)n+1 \leq 6kn$. Therefore
\[  \log(\lambda) \leq 2\log(4D^4) \cdot \frac{1}{k} \leq 2\log(4D^4) \cdot \frac{6n}{g_{n,k}} = C'  \hspace{1mm} \frac{n}{g_{n,k}},\]
where $C' := 12 \log(4D^4)$.
\end{proof}

\textbf{Step 5:} We want to use the mapping torus of $f_{n,k} : P_{n,k} \longrightarrow P_{n,k}$ to construct pseudo-Anosov maps on surfaces of genus $g_{n,k} \leq g \leq g_{n,k+1}$ with `small' stretch factors. These maps should still keep invariant $n$ of the singularities of their invariant foliations. For each integer $r \geq 0$, consider the homology class $[P_{n,k}^r] := [P_{n,k}] + r[F_{n,k}]$, where $F_{n,k}$ is the orientable surface of genus two constructed in Lemma \ref{genustwosurface}. A representative $P_{n,k}^r$ for the homology class $[P_{n,k}^r]$ can be obtained by taking the oriented sum (cut and paste) of the surface $P_{n,k}$ and $r$ copies of the surface $F_{n,k}$. 
\begin{lem}
The surface $P_{n,k}^r$ is Thurston norm-minimizing, and its genus is equal to $g_{n,k}^r: = g_{n,k}+r$. In particular as $r$ varies between $0$ and $6n$, the genera of $P_{n,k}^r$ cover the range between $g_{n,k}$ and $g_{n,k+1}$. Moreover, $P_{n,k}^r$ is the fiber of a fibration of $M_{n,k}$ with pseudo-Anosov monodromy that fixes $2n$ of the singularities of its invariant foliation.
\label{cover}
\end{lem}

\begin{proof}
We have 
\[ \chi(P_{n,k}^r) = \chi(P_{n,k})+ r \cdot \chi(F_{n,k}) = (-2 g_{n,k}+2) - 2r = -2(g_{n,k}+r)+2.   \]
This proves the identity for the genus of $P_{n,k}^r$. To see that $P_{n,k}^r$ is norm-minimizing, note that $[P_{n,k}^r] \subset \mathcal{C}_{n,k}$. By linearity of the Thurston norm on a fibered face, we have
\[
x ([P_{n,k}^r])  = x \big([P_{n,k}] + r[F_{n,k}] \big) = x([P_{n,k}] )+ r x([F_{n,k}]) = \chi_-(P_{n,k}) + 2r = 2(g_{n,k}+r)-2.
\]
The identity $g_{n,k} = (6k-1)n+1$ implies that 
\[ g_{n,k+1} - g_{n,k} = 6n. \]
Hence, as $r$ varies between $0$ and $6n$, the genera of $P_{n,k}^r$ cover the range between $g_{n,k}$ and $g_{n,k+1}$.

The homology class $[P_{n,k}^r]=[P_{n,k}] + r[F_{n,k}]$ is clearly integral. It is primitive as well, since there is a curve in $M_{n,k}$ that intersects $P_{n,k}$ transversely and exactly once, while avoiding $F_{n,k}$. The class $[P_{n,k}^r]$ is integral and primitive and lies in the fibered face $\mathcal{C}_{n,k}$, and hence by Theorem \ref{thurston} is the fiber of a fibration of $M_{n,k}$. The monodromy $f_{n,k}$ corresponding to one fibration for $\mathcal{C}_{n,k}$ is pseudo-Anosov, hence every monodromy corresponding to this face and coming from the first return map of the suspension flow is pseudo-Anosov \cite[Lemma 14.12]{fathi1979travaux}. In particular, $f_{n,k}^r$ is pseudo-Anosov.

Note that the singularities of the stable foliation of $f_{n,k}$ that are fixed by the map $f_{n,k}$, are the $2n$ intersection points of the axis of $\rho_1$ with $P_{n,k}$. Moreover the surface $F_{n,k}$ can be isotoped to be transverse to the suspension flow and be disjoint from the orbit of these $2n$ singularities (see the proof of Lemma \ref{genustwosurface}). This shows that if we look at the monodromy $f_{n,k}^r$ of the fibration of $P_{n,k}^r$, the corresponding $2n$ singularities are still fixed by $f_{n,k}^r$.
\end{proof}

\begin{lem}
There is a constant $C>0$ such that for every $n \geq 1$, $k \geq 3$, and $0 \leq r \leq 6n$ we have
\[ \lambda_{n,k}^r \leq C \frac{n}{g_{n,k}^r}. \]
\label{stretch-complete}
\end{lem}

\begin{proof}

Let $\mathcal{C} = \mathcal{C}_{n,k}$, and denote by $h: \mathcal{C} \longrightarrow \mathcal{R}$ the function coinciding with the entropy function for primitive integral points, given by Theorem \ref{Fried}. Note we have
\begin{equation}  
g_{n,k}^r = g_{n,k}+r \leq g_{n,k}+6n < 2 g_{n,k},
\label{comparison}
\end{equation}
since 
\[ g_{n,k} = (6k-1)n +1 >6n. \]
Hence 
\[ h([P_{n,k}^r]) < h([P_{n,k}]) \leq C' \hspace{1mm} \frac{n}{g_{n,k}}\leq 2C' \hspace{1mm} \frac{n}{g_{n,k}^r}, \]
where the first inequality is by Proposition \ref{agol}, the second inequality by Lemma \ref{stretch}, and the third inequality by (\ref{comparison}). Therefore, we can take $C = 2C'$.

\end{proof}

\textbf{Final Step:} Steps 1 and 2 define the surfaces $P_{n,k}$ (with $n$ punctures) and pseudo-Anosov mapping classes
\[ f_{n,k} \colon P_{n,k} \longrightarrow P_{n,k}, \]
for $n \geq 1$ and $k \geq 3$. Denote the genus of $P_{n,k}$ by $g_{n,k}$ and the stretch factor of $f_{n,k}$ by $\lambda_{n,k}$. By Lemma \ref{stretch}, there is a universal constant $C'>0$ such that 
\[ \lambda_{n,k} \leq C' \frac{n}{g_{n,k}}.\]
For any $0 \leq r \leq 6n$, define the surface $P_{n,k}^r$ with genus $g_{n,k}^r$ and the pseudo-Anosov mapping class
\[ f_{n,k}^r  \colon P_{n,k}^r \longrightarrow P_{n,k}^r, \] 
as in Step 5. By Lemma \ref{stretch-complete}, the map $f_{n,k}^r$ fixes $2n$ singularities of its invariant foliation. Hence by puncturing $P_{n,k}^r$ at $n$ of these singularities, we can think of $f_{n,k}^r$ as defined on a surface of genus $g_{n,k}^r$ with $n$ punctures. Denote the stretch factor of $f_{n,k}^r$ by $\lambda_{n,k}^r$. By Lemma \ref{stretch-complete}, there is a universal constant $C>0$ such that for any $n \geq 1$, $k \geq 3$, and $0 \leq r \leq 6n$ we have
\[ \lambda_{n,k}^r \leq C \frac{n}{g_{n,k}^r}. \]
Moreover by Lemma \ref{cover}, as $r$ varies between $0$ and $6n$, the genera $g_{n,k}^r$ cover all natural numbers between $g_{n,k}$ and $g_{n,k+1}$. Therefore for each fixed $n \geq 1$, the set 
\[ S_n = \{ g_{n,k}^r \hspace{1mm}| \hspace{1mm} k \geq 3, \hspace{2mm} 0 \leq r \leq 6n \}, \]
coincides with the set of natural numbers larger than or equal to $17n+1 = g_{n,3}$. Hence we have enough `small dilatation' examples to conclude the theorem with $C$ as in Lemma \ref{stretch-complete}.
\end{proof}

\begin{remark}
The idea of using different fibrations of the same 3-manifold to construct many small dilatation pseudo-Anosov maps was first used by McMullen in \cite{mcmullen2000polynomial}. The idea of constructing a closed, orientable surface of genus two in the closure of the fibered face is borrowed from \cite{leininger2012number, agol2016pseudo}. 
\end{remark}

\begin{remark}
A property of the constructed examples is 
\[ b_1(M_{n,k}) \geq n+1, \]
where $b_1$ denotes the first Betti number. In fact if we start with the curve $\rho_{1}^{i-1}(\gamma)$ instead of $\gamma$ in Lemma \ref{genustwosurface} for $1 \leq i \leq n$, we can construct an orientable surface, $F_{n,k}^i$, of genus two in $H_2(M_{n,k};\mathbb{Z})$. Then the homology classes of the surfaces 
\[ P_{n,k}, \hspace{1mm} F_{n,k}^1, \hspace{1mm} F_{n,k}^2 , \cdots, F_{n,k}^n  \]
are linearly independent. This is because for any member of the above list, one can find a curve in $M_{n,k}$ that is disjoint from all of them except the singled out member, and transversely intersects the remaining member exactly at one point. Compare with the lower bound for dilatation in \cite{agol2016pseudo} under the the assumption that $b_1$ is bounded from below.
\end{remark}

\newtheorem*{thm:main}{Theorem \ref{thm:main}}
\begin{thm:main}
For any fixed $n \in \mathbb{N}$, there are positive constants $B_1 = B_1(n)$ and $B_2=B_2(n)$ such that for any $g \geq 2$
\[ \frac{B_1}{g} \leq l_{g,n} \leq \frac{B_2}{g}. \] 
\end{thm:main}

\begin{proof}
First we prove the upper bound. By Theorem \ref{thm1} there exists a number $C>0$ such that for $g \geq 17n+1$ we have 
\[  l_{g,n} \leq C \hspace{1mm} \frac{n}{g}. \]
Hence we may take $B_2(n)$ equal to 
\[ \max \{ \hspace{1mm} Cn , \hspace{1mm}\hspace{1mm} l_{1,n} \hspace{1mm}, \hspace{1mm} 2 \hspace{1mm}l_{2,n} \hspace{1mm}, 3 \hspace{1mm}l_{3,n}, ... , \hspace{1mm} 17n \hspace{1mm} l_{17n,n} \hspace{1mm} \}. \]
\noindent For the lower bound, Penner \cite{penner1991bounds} has proved the following general result
\[ \l_{g,n} \geq \frac{\log(2)}{12g-12+4n}.  \]
Therefore, one can take $B_1(n) = \frac{\log(2)}{12n}$ since 
\[ \frac{\log(2)}{12g-12+4n} > \frac{\log(2)}{12ng}. \]
\end{proof}

\begin{cor}
Fix a natural number $n$. The asymptotic behavior of $l_{g,n}$ for $g$ varying is like $\frac{1}{g}$. 
\end{cor}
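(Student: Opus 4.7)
The plan is to read this off directly from Theorem \ref{thm:main}, which has just been established. Fixing $n$, that theorem supplies two positive constants $B_1 = B_1(n)$ and $B_2 = B_2(n)$, depending only on $n$, such that
\[ \frac{B_1(n)}{g} \leq l_{g,n} \leq \frac{B_2(n)}{g} \]
for every $g \geq 2$. Since $n$ is fixed for the purpose of the corollary, $B_1(n)$ and $B_2(n)$ are absolute constants from the point of view of the variable $g$. Thus the displayed inequality is already the statement that $l_{g,n}$ is bounded above and below by constant multiples of $1/g$, i.e.\ $l_{g,n} = \Theta(1/g)$ as $g$ varies, which is the assertion of the corollary.

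No further ingredient is needed. The upper bound in Theorem \ref{thm:main} came from Theorem \ref{thm1} (valid for $g \geq 17n+1$), extended down to $g = 2$ by absorbing the finitely many values $l_{2,n},\dots,l_{17n,n}$ into a maximum. The lower bound was Penner's general estimate $l_{g,n} \geq \log(2)/(12g-12+4n)$, which for fixed $n$ is clearly of order $1/g$. The whole content of the corollary is therefore encoded in the two-sided inequality of Theorem \ref{thm:main}, and the proof consists simply of invoking that theorem and rephrasing the bound in asymptotic language. There is no genuine obstacle at this step; all the work lies upstream in the construction of Section 3.
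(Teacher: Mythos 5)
Your proposal is correct and matches the paper's intent exactly: the corollary is an immediate rephrasing of Theorem \ref{thm:main}, which the paper accordingly states without further proof. Nothing more is needed.
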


\noindent In \cite{penner1991bounds}, Penner proved that
\[ \frac{\log(2)}{12g-12} \leq l_{g,0} \leq \frac{\log(11)}{g} .\]
He mentions: ``The results of \cite{bauer1992upper} can be used to sharpen the upper bound when $n=0$, and we expect that they can also be used to give an analogous upper bound in case $n \neq 0$." The above corollary confirms that such an upper bound exists, however our method is different from \cite{bauer1992upper}.




\section{Uniform Bounds}

In this section, we improve the upper bound in Theorem \ref{thm:main} and give an upper bound independent of the number of punctures. The trade off is the assumption that the genus is `large' compared to the number of punctures.



\newtheorem*{thm:uniform}{Theorem \ref{thm:uniform}}
\begin{thm:uniform}
There exist universal positive constants $A$, $B$ and $C$ such that for any natural number $n $ and any natural number $g \geq C \hspace{1mm} n\log^2(n)$ we have
\[ \frac{B}{g} \leq l_{g,n} \leq \frac{A}{g}. \]
\end{thm:uniform}

\begin{proof}
Before starting the proof, we should give a warning that some of the notation used in the proof of this theorem is similar to those used in the proof of Theorem \ref{thm1}, including $f_{n,k}$ and $\lambda_{n,k}$. However, in general the definitions are different from before unless otherwise stated, and one should not confuse them.

Fix a natural number $n$. We can assume that $n \geq 3$, since the asymptotic behaviour is already known for $n = 0,1,2$ by \cite{tsai2009asymptotic}. We will use a construction similar to the proof of Theorem \ref{thm:main}, however we need $k$ and $n$ to be coprime here. \\

\textbf{Step 1}:
Let $\mathcal{S}_n$ be the following set
\[ \mathcal{S}_n := \{ \hspace{1mm}  a \in \mathbb{N} \hspace{1mm} | \hspace{1mm} a \geq \log^2(n) , \hspace{2mm} \gcd(n,a)=1\}. \]
The next lemma shows that $\mathcal{S}_n$ is not `sparse' as a subset of natural numbers.
\begin{lem}
The ratio of any two consecutive members of $\mathcal{S}_n$ is bounded above by $2K$, where $K$ is the constant in Lemma \ref{coprime}. In particular, $K$ is independent of $n$. Moreover, if $a_1$ is the smallest element of $\mathcal{S}_n$ then $a_1 \leq K \log^2(n)$.
\label{consecutive}
\end{lem}
\begin{proof}
Order the elements of $\mathcal{S}_n$ as 
\[ a_1 < a_2 < a_3 < \cdots . \] 
Let $K \geq 2$ be the constant in Lemma \ref{coprime}. Therefore each of the intervals 
\[ [\log^2(n), K \log^2(n)) , [K \log^2(n), 2K \log^2(n)) , [2K \log^2(n), 3K \log^2(n)), \dots \] contains at least one element of $\mathcal{S}_n$. Given $a_i$, either $a_i \in [\log^2(n), K \log^2(n))$ or there is $r \in \mathbb{N}$ such that $a_i \in [r K\log^2(n) , (r+1)K\log^2(n)) $. In the former case, we have 
\[ \frac{a_{i+1}}{a_i} < \frac{2K \log^2(n)}{\log^2(n)}=2K. \]
In the latter case, one has  
\[ \frac{a_{i+1}}{a_i} < \frac{(r+2)K\log^2(n)}{rK \log^2(n)} \leq 3.  \]
Therefore, we may take $2K = \max \{ 2K , 3 \}$ as the desired upper bound.
\end{proof}

\textbf{Step 2:}
From now on we assume that $k \in \mathcal{S}_n$. In particular $k$ and $n$ are coprime. Define the surface $P_{n,k}$ and mapping classes $\rho_1$ and $\rho_2$ as in Step 1 of the proof of Theorem \ref{thm1}. Define the multi curves $\mathcal{B}$ and $\mathcal{R}$ as in Step 2 of the proof of Theorem \ref{thm1}. Let $\eta_{b}$ be the composition of positive Dehn twists along the curves in $\mathcal{B}$. Define $\eta_{r}$ as the composition of negative Dehn twists along the curves in $\mathcal{R}$. 
\begin{notation}
Given $n \in \mathbb{N}$, define $\bar{n} \in \mathbb{Z}$ as follows
\begin{align*}
& n \equiv 1 \hspace{3mm}(\text{mod } 2) \implies \bar{n}=\frac{n-1}{2},  \\
& n \equiv 0 \hspace{3mm}(\text{mod } 4) \implies \bar{n}=\frac{n-2}{2},  \\
& n \equiv 2 \hspace{3mm}(\text{mod } 4) \implies \bar{n}=\frac{n-4}{2}.  
\end{align*}
\end{notation}
Define $\bar{k}$ similarly. The next lemma states that both numbers $\bar{n}$ and $-\bar{n}$ have `large' remainders modulo $n$.
\begin{lem}
Let $n \geq 3$. The number $\bar{n}$ satisfies $\gcd(n,\bar{n})=1$. Moreover, any integer $r $ that is congruent to $\bar{n}$ or $-\bar{n}$ modulo $n$ satisfies $|r|\geq \frac{n}{6}$. 
\label{large-remainder}
\end{lem}

\begin{proof}
We just consider the case $n \equiv 2 \hspace{3mm}(\text{mod } 4)$, as the other two cases are similar. 
\[ n \equiv 2 \hspace{3mm}(\text{mod } 4) \implies \exists q \in \mathbb{Z} \hspace{2mm} \text{s.t.} \hspace{2mm} n = 4q+2 \implies \bar{n} = 2q -1   \]
\[ \implies \gcd(n , \bar{n}) = \gcd(4q+2 , 2q-1)= \gcd(4q+2- 2(2q-1) ,2q-1 )= 1.\]
This proves the first part. The second part follows from 
\[ n = 4q+2 \geq 6 \implies \min \{ \bar{n}, n - \bar{n} \} = \bar{n} \geq \frac{n}{6} \iff q\geq 1. \]
\end{proof}

\begin{definition}
Since $\gcd(n,k)=1$, by the Chinese Remainder Theorem there is a unique number $1 \leq c \leq nk $ such that
\begin{align*}
& c \equiv n^{-1}(\bar{k})^{-1} \hspace{2mm} (\text{mod } k ), \\
& c \equiv k^{-1}(\bar{n})^{-1} \hspace{2mm} (\text{mod } n ).
\end{align*}
Moreover $c$ is coprime to $nk$ by the previous lemma.
\end{definition}

\textbf{Step 3:}
Let $\eta = \tau_{b_1} \circ \tau_{r_1}^{-1}$ and define the map 
\[ f_{n,k} \colon P_{n,k} \longrightarrow P_{n,k}\] 
as 
\[ f_{n,k} := (\rho_2 \circ \rho_1)^c \circ \eta \circ \eta_{b} \circ \eta_{r},\]
where $\eta_r$ and $\eta_b$ are defined as in Step 2. Note we are raising the rotation $\rho_2 \circ \rho_1$ to the power $c$. This is a technical point; the power has been chosen carefully so that the resulting map has a `small' stretch factor.\\

\textbf{Step 4:}
Since $\gcd(k,n)=\gcd(c,kn)=1$, the map $(f_{n,k})^{kn}$ is pseudo-Anosov by Penner's construction, and hence so is $f_{n,k}$. Let $\tau_{n,k}$ be the invariant train track for $f_{n,k}$ obtained from Penner's construction and $V_{\tau}$ be the vector space of transverse measures on $\tau_{n,k}$. For any connected curve $x \in \hat{\mathcal{A}}$, define the transverse measure $\mu_x$ as in Step 4 of the proof of Theorem \ref{thm1}. Let $H$ be the subspace of $V_{\tau}$ spanned by the elements
\[ \{ \mu_x \hspace{1mm} | \hspace{1mm} x \text{ is a connected curve in } \hat{\mathcal{A}} \}. \]
Then $H$ is an invariant subspace under the action of $f_{n,k}$ on $V_\tau$. Denote by $M$ the linear action of $f_{n,k}$ on $H$ in the standard basis. The goal is to give an upper bound for the spectral radius of $M$. 

Let $\Gamma=\Gamma_{n,k}$ be the adjacency graph corresponding to $M$. We partition the vertices of $\Gamma$ into $nk$ sets $U_{i,j}$, where $ i \in \mathbb{Z}_n = \mathbb{Z}/n\mathbb{Z}$ and $ j \in \mathbb{Z}_k = \mathbb{Z} / k\mathbb{Z}$. Define $U_{i,j}$ to be the set of vertices of $\Gamma$ that correspond to the elements 
\[ \{ \mu_y \hspace{1mm} | \hspace{1mm} y \text{ is a connected curve in } \rho_1^i \circ \rho_2^j(\mathcal{A}) \}.\] 
Therefore $U_{i,j}$ form a partition of the vertices of $\Gamma_{n,k}$. The next Lemma follows from the combinatorics of the curves in $\hat{\mathcal{A}}$.

\begin{lem}
If $v \in U_{i,j}$, then $v^+ \subset U_{i+c,j+c}$ except when $|i|+|j|=1$. For these four exceptional sets, the outgoing edges behave as follows
\begin{align*}
 w \in U_{0,1} & \longmapsto w^+ \in U_{c , c+1} \cup U_{c,c}  \\
 w \in U_{0,-1}& \longmapsto w^+ \in U_{c,c-1} \cup U_{c,c}  \\
 w \in U_{1,0} & \longmapsto w^+ \in U_{c+1,c} \cup U_{c,c}  \\
 w \in U_{-1,0}& \longmapsto w^+ \in U_{c-1,c} \cup U_{c,c}.  
\end{align*}
\end{lem}

\begin{definition}
Define the simple directed graph $\bar{\Gamma}= \bar{\Gamma}_{n,k}$ as follows. The vertices consist of  
\[ \{ u_{i,j} \hspace{1mm}| \hspace{1mm} i \in \mathbb{Z}_n, \hspace{2mm} j \in \mathbb{Z}_k \}. \] 
There is an oriented edge from $u_{i,j}$ to $u_{i',j'}$ iff there exists at least one edge in the graph $\Gamma$ from the set $U_{i,j}$ to the set $U_{i',j'}$. 
\end{definition}
In other words, $\bar{\Gamma}$ is the simple directed graph obtained from the graph $\Gamma$ by collapsing each set $U_{i,j}$ to a single vertex $u_{i,j}$. 

For $x \in \mathbb{R}$, let $\lceil x \rceil $ be the smallest integer greater than or equal to $x$.

\begin{lem}
The length of any directed cycle in the graph $\bar{\Gamma}=\bar{\Gamma}_{n,k}$ is strictly larger than $\lceil \frac{nk}{12} \rceil$.
\label{distinct}
\end{lem}

\begin{proof}
Without loss of generality assume that $nk \geq 12$, otherwise the bound is trivial. The proof uses the congruence conditions that was forced upon the number $c$ in its definition. Recall that in the graph $\bar{\Gamma}$, there is a directed edge from each vertex $u_{i,j}$ to the vertex $u_{i+c,j+c}$, where $i \in \mathbb{Z}_n$ and $j \in \mathbb{Z}_k$. Moreover, there are four \emph{exceptional edges}, where an exceptional edge starts at one of  the vertices $u_{i,j}$ for $|i|+|j|=1$ and ends in the vertex $u_{c,c}$. 

Let $\mathcal{C}$ be one of the shortest directed cycles of $\bar{\Gamma}$. Order the vertices of $\mathcal{C}$ as
\[ x_1 \mapsto x_2 \mapsto \dots \mapsto x_{\ell} \mapsto x_1, \]
where $\ell$ is the length of the cycle and $x_i$ are distinct vertices of $\bar{\Gamma}$. First, we observe that there can be at most one exceptional edge in the cycle $\mathcal{C}$. This is because all four exceptional edges in $\bar{\Gamma}$ have the same endpoint, hence if two of them appeared simultaneously in $\mathcal{C}$ then the list of vertices of $\mathcal{C}$ would have had repetition.

Consider two cases:
\begin{enumerate}
\item If there is no exceptional edge in the cycle $\mathcal{C}$: If the index of $x_1$ is equal to $(i_1,j_1)$ (i.e.,  $x_1 = u_{i_1,j_1}$) then the index of $x_s$ is equal to $(i_1+(s-1)c), j_1+(s-1)c)$. Since $x_{\ell+1}=x_1$, their indices should be equal:
\begin{align*}
i_1+\ell c &\equiv i_1 \hspace{3mm} (\text{mod } n), \\
j_1+\ell c &\equiv j_1 \hspace{3mm} (\text{mod } k). 
\end{align*}
Since $\gcd(c,n)=\gcd(c,k)= \gcd(n,k)=1$, the above equations imply that $nk|\ell$. Therefore 
\[  \ell \geq nk > \lceil \frac{nk}{12} \rceil. \]
\item If there is exactly one exceptional edge in the cycle $\mathcal{C}$: The initial vertex of this exceptional edge is one of $u_{0,1}$, $ u_{0,-1}$, $u_{1,0}$ or $u_{-1,0}$. We analyse each case separately. \\
\begin{enumerate}
	\item If the initial vertex is $u_{0,1}$. Recall
	\[ w \in U_{0,1} \longmapsto w^+ \in U_{c , c+1} \cup U_{c,c}.  \]
	Again $x_{\ell+1}=x_1$ implies the following congruence relations
	\begin{align*}
	i_1+\ell c & \equiv i_1 \hspace{3mm} (\text{mod } n), \\
	j_1+\ell c -1 & \equiv j_1 \hspace{3mm} (\text{mod } k).
	\end{align*}
	From the first equation we deduce that $n | \ell$, hence we can write $\ell = n \ell' $ where $\ell'$ is a natural number. Substituting in the second equation, we obtain that
	\begin{align*}
	n c \ell' &\equiv 1 \hspace{3mm} (\text{mod } k) \\
	\implies \ell' \equiv n^{-1}c^{-1} &\equiv \bar{k} \hspace{3mm} (\text{mod } k).
	\end{align*}
	By Lemma \ref{large-remainder} we have $\ell' \geq \frac{k}{6}$, which gives the following bound
	\[ \ell = n \ell' \geq n \cdot \frac{k}{6} \geq \frac{nk}{12}+1 > \lceil \frac{nk}{12} \rceil . \] 
	\item If the initial vertex is $u_{0,-1}$. Proceeding as in the previous case, we get the following system of equations
	\begin{align*}
	\ell c &\equiv 0 \hspace{3mm} (\text{mod } n), \\
	\ell c +1 &\equiv 0 \hspace{3mm} (\text{mod } k).
	\end{align*}
	Setting $\ell = n \ell'$, we deduce that
	\[ \ell' \equiv -(nc)^{-1} \equiv -\bar{k} \hspace{3mm} (\text{mod } k). \]
	Again $\ell \geq \frac{k}{6}$ and the same argument applies.
	\item If the initial vertex is $u_{1,0}$. In this case we obtain the following equations
	\begin{align*}
	\ell c -1 &\equiv 0 \hspace{3mm} (\text{mod } n), \\
	\ell c  &\equiv 0 \hspace{3mm} (\text{mod } k).
	\end{align*}
	From the second equation, we have $k | \ell$. Therefore one can write $\ell = k \ell'$ for a natural number $\ell'$. Substituting in the first equation gives
	\begin{align*}
	kc \ell' &\equiv 1 \hspace{3mm} (\text{mod } n) \\
	\implies \ell' \equiv k^{-1} c^{-1} &\equiv \bar{n} \hspace{3mm} (\text{mod } n).
	\end{align*}
	In particular $\ell' \geq \frac{n}{6}$, which implies that
	\[ \ell = k \ell' \geq  k \cdot \frac{n}{6}>  \lceil \frac{nk}{12} \rceil. \]
	\item If the initial vertex is $u_{-1,0}$. In this case we have $\ell = k \ell'$ such that
	\begin{align*}
	kc \ell' &\equiv -1 \hspace{3mm} (\text{mod } n) \\
	\implies \ell' \equiv -k^{-1} c^{-1} &\equiv -\bar{n}\hspace{3mm} (\text{mod } n).
	\end{align*}
	Again the same argument is carried.
\end{enumerate}
\end{enumerate}

This finishes the proof of the lemma.
\end{proof}

\begin{lem}
There is a constant $E$ such that for any pair of relatively prime numbers $(n, k)$ and for any vertex $v$ in $\Gamma=\Gamma_{n,k}$, the number of directed paths of length $\lceil \frac{nk}{12} \rceil$ and starting from $v$ is at most $E$.
\label{paths}
\end{lem}

\begin{proof}
Observe that most vertices of the graph $\Gamma$ have both ingoing and outgoing degree equal to one. More precisely, given a set $U_{i,j}$, define the ball of radius $1$ around it as 
\[ B_1(U_{i,j}) = U_{i,j} \cup \{ v \hspace{1mm} | \hspace{1mm} v \in U_{i',j'} \text{ and there is an edge between } u_{i,j} \text{ and } u_{i',j'}  \}.  \] 
Let $\mathcal{L}$ be the set of indices $(i,j) \in \mathbb{Z}_n \times \mathbb{Z}_k$ such that, there exists a vertex $v \in B_1(U_{i,j})$ with either $\text{deg}_{\text{in}}(v)  \neq 1$ or $\text{deg}_{\text{out}}(v) \neq 1$. Then the size of $\mathcal{L}$, $|\mathcal{L}| $, is bounded above by a universal constant independent of $n$ and $k$. Define $\mathcal{SL}$ as the set of ordered subsets of $\mathcal{L}$. Therefore, $|\mathcal{SL}|$ is bounded above by a universal constant independent of $n$ and $k$.

Let $\mathcal{P}$ be the set whose elements are directed paths of length $\lceil \frac{nk}{12} \rceil$ and starting from $v$. We show that $|\mathcal{P}|$ is at most $|\mathcal{SL}| \times 12^{\mathcal{L}}$, using the Pigeonhole principle. Assume the contrary that $|\mathcal{P}|>|\mathcal{SL}| \times 12^{\mathcal{L}}$.\\

Let $p \in \mathcal{P}$ be a directed path with vertices
\[ p \colon x_1 \rightarrow x_2 \rightarrow \cdots \rightarrow x_{\ell}.  \]
We look at the vertices $x_t$ that belong to one of the sets $U_{i,j}$ with $(i,j) \in \mathcal{L}$, and record the corresponding indices $(i,j)$ in the order that they appear in $p$. Note the recorded indices are all distinct by Lemma \ref{distinct}. Hence we are assigning an element of $\mathcal{SL}$ to $p$, which we call the \emph{type} of $p$. By the Pigeonhole principle, there is a subset $\mathcal{P}_1 \subset \mathcal{P}$ of size at least $12^{\mathcal{L}}$, all of whose elements have the same type.

Consider a directed path $p \in \mathcal{P}_1$ with vertices  
\[ p \colon x_1 \rightarrow x_2 \rightarrow \cdots \rightarrow x_{\ell}.  \]
We look at the vertices $x_t$ that belong to one of the sets $U_{i,j}$ with $(i,j) \in \mathcal{L}$, and record them in an ordered tuple of length at most $\mathcal{L}$. We call this tuple the \emph{prime location} of $p$. Since the type of elements of $\mathcal{P}_1$ is fixed, and each set $U_{i,j}$ has size $12$, the number of possible locations for elements of $\mathcal{P}_1$ is at most $12^{\mathcal{L}}$. By the Pigeonhole principle, there are two elements $p$ and $q$ of $\mathcal{P}_1$ that have the same prime location. 

Consider the maximal subpath $s$ containing the vertex $v$ that $p $ and $q$ agree on. Let $w$ be the terminal vertex of $s$. Since $p$ and $q$ have the same length and $p \neq q$, we have $s \neq p$ and $s \neq q$. Choose $w_p$ and $w_q$ such that there is an outgoing edge from $w$ to $w_p$ (respectively $w_q$) in the path $p$ (respectively $q$). Assume that $w_p \in U_{i,j}$ and $w_q \in U_{i',j'}$. Consider two cases:
\begin{enumerate}

\item If $(i,j) \not \in \mathcal{L}$ (or $(i',j') \not \in \mathcal{L}$): In this case, any vertex in the ball of radius $1$ around $U_{i,j}$, including $w$, has both ingoing and outgoing degree equal to $1$. However the outgoing degree of $w$ is at least $2$, which is a contradiction.

\item If $(i,j) \in \mathcal{L}$ and $(i',j') \in \mathcal{L}$: Since $p$ and $q$ have the same type, we must have $(i,j) = (i',j')$. As $p$ and $q$ have the same prime location, we have $w_p = w_q$, which is a contradiction. 

\end{enumerate}
The contradiction shows that the claimed bound holds, and the proof is complete.

\end{proof}

\textbf{Step 5:}
\begin{lem}
Denote the stretch factor of $f_{n,k}$ by $\lambda_{n,k}$. There is a universal constant $E$ independent of $k$ and $n$ such that
\[\log(\lambda_{n,k})  \leq \frac{72\log(E)}{g_{n,k}}, \]
where $g_{n,k}= (6k-1)n+1$ is the genus of $P_{n,k}$. 
\end{lem}

\begin{proof}
Denote the spectral radius of $M$ by $\rho(M)$. Lemma \ref{paths} implies that
\begin{align*}
\rho(M)^{\frac{nk}{12}} &\leq E \\
\implies (\frac{nk}{12}) \log(\rho(M)) &\leq \log(E).
\end{align*}
Note that $\rho(M) = \lambda_{n,k}$. Therefore
\[\log(\lambda_{n,k}) = \log(\rho(M)) \leq \frac{12\log(E)}{nk} \leq \frac{72\log(E)}{g_{n,k}}, \]
since $g_{n,k}= (6k-1)n+1 \leq 6kn$.
\end{proof}

\textbf{Step 6:}
Denote the mapping torus of $f_{n,k}$ by $M_{n,k}$. Let $\mathcal{C}_{n,k} \subset H_2(M_{n,k}; \mathbb{Z})$ be the fibered face of the Thurston norm corresponding to the monodromy $f_{n,k}$. There exists a norm-minimizing surface $F_{n,k}$ of genus two in $M_{n,k}$ such that $F_{n,k} \subset \overline{\mathcal{C}}_{n,k}$. This can be shown, as in the proof of Lemma \ref{genustwosurface}, by following the image of $\gamma$ under iterations of $f_{n,k}$. 

For $r \geq 0$ define the homology class $[P_{n,k}^r] \in \mathcal{C}_{n,k}$ as 
\[ [P_{n,k}^r] = [P_{n,k}]+ r [F_{n,k}]. \]
An embedded surface $P_{n,k}^r$ representing the homology class $[P_{n,k}^r]$ is obtained by taking the oriented sum of $P_{n,k}$ and $r$ copies of $F_{n,k}$. Denote the genus of $P_{n,k}^r$ by $g_{n,k}^r$. Using the linearity of the Thurston norm on a face of the Thurston norm, we can compute (see the proof of Lemma \ref{cover})
\[ g_{n,k}^r= g_{n,k}+r .\]
The surface $P_{n,k}^r$ is the fiber of a fibration of $M_{n,k}$ with pseudo-Anosov monodromy $f_{n,k}^r$.

\begin{lem}
If $k <k'$ are two consecutive members of $\mathcal{S}_n$, then 
\[ \frac{g_{n,k'}}{g_{n,k}} \leq 4K,  \]
where $K$ is the constant in Lemma \ref{coprime}. In particular, this ratio is bounded above by a constant independent of $k$ and $n$. 
\end{lem}

\begin{proof}
\[ \frac{g_{n,k'}}{g_{n,k}} = \frac{(6k'-1)n+1}{(6k-1)n+1} \leq \frac{(6k'-1)n}{(6k-1)n}= \frac{6k'-1}{6k-1} \leq 2 \frac{k'}{k} \leq 4K.   \]
Here the first inequality follows from 
\[ u \geq v>0 \implies \frac{u+1}{v+1} \leq \frac{u}{v}.\]
The second inequality follows from direct computation, and the third inequality holds by Lemma \ref{consecutive}.
\end{proof}

\textbf{Step 7:} 
Let $f_{n,k}^r$ be the pseudo-Anosov map defined as in Step 6. Denote the stretch factor of $f_{n,k}^r$ by $\lambda_{n,k}^r$.
\begin{lem}
Let $k< k'$ be two consecutive members of $\mathcal{S}_n$, and $K$ be the constant in Lemma \ref{coprime}. There is a universal constant $A$ independent of $n$, $ k$ and $r$ such that, as $r$ varies between $0$ and $(4K-1)g_{n,k}$, the genera of the surfaces $P_{n,k}^r$ cover all natural numbers between $g_{n,k}$ and $g_{n,k'}$, and 
\[  \log(\lambda_{n,k}^r) \leq \frac{A}{g_{n,k}^r}. \]
Moreover if $k =a_1$ is the smallest element in $\mathcal{S}_n$, then 
\[ g_{n,a_1} \leq 6K \cdot n \log^2(n).  \]
\end{lem}

\begin{proof}
Recall that $g_{n,k}^r = g_{n,k} +r$ by Step 6. Moreover, we have 
\[ \frac{g_{n,k'}}{g_{n,k}} \leq 4K.  \]
This proves the first part of the lemma. Let $h\colon \mathcal{C}_{n,k} \rightarrow \mathbb{R}$ be the Fried's function as in Lemma \ref{Fried}, which extends the entropy function on primitive integral points. We have
\[ h([P_{n,k}^r])<h([P_{n,k}])=\log(\lambda_{n,k}) \leq  \frac{72 \log(E)}{g_{n,k}} \leq 4K \cdot \frac{72 \log(E)}{g_{n,k}^r}.  \]
Here the first inequality is by Proposition \ref{agol}, using the identity $[P_{n,k}^r] = [P_{n,k}]+ r[F_{n,k}]$ and $[F_{n,k}] \in \overline{\mathcal{C}}_{n,k}$. The second inequality is by Step 5, and the third inequality follows from 
\[ g_{n,k}^r = g_{n,k}+r \leq g_{n,k}+ (4K-1)g_{n,k}= 4K \cdot g_{n,k}. \]
Hence we can take $A = 4K \cdot 72 \log(E)$, where $K$ and $E$ are the constants in Lemma \ref{coprime} and Lemma \ref{paths} respectively.
For the last part, note that 
\[ g_{n,a_1} = (6a_1 -1)n+1 \leq 6 a_1 n \leq 6K \cdot n \log^2(n), \]
since $a_1 \leq K \log^2(n)$ by Lemma \ref{consecutive}.
\end{proof}

\textbf{Final Step:}
Let $n \geq 3$. Define $\mathcal{S}_n$ as in Step 1, and let $k \in \mathcal{S}_n$. Define the pseudo-Anosov map $f_{n,k} \colon P_{n,k} \rightarrow P_{n,k}$ as in Step 3, and denote the genus of $P_{n,k}$ by $g_{n,k}$. Let $K$ be the constant in Lemma \ref{coprime}. Define pseudo-Anosov maps $f_{n,k}^r  \colon P_{n,k}^r \rightarrow P_{n,k}^r$ for $0 \leq r \leq  (4K-1)g_{n,k}$ as in Step 6. By Step 7, there is a universal constant $A$ independent of $n$, $ k$ and $r$ such that, as $r$ varies between $0$ and $(4K-1)g_{n,k}$, the genera of the surfaces $P_{n,k}^r$ cover all natural numbers between $g_{n,k}$ and $g_{n,k'}$, and 
\[  \log(\lambda_{n,k}^r) \leq \frac{A}{g_{n,k}^r}. \]
Moreover, if $k =a_1$ is the smallest element in $\mathcal{S}_n$ then 
\[ g_{n,a_1} \leq 6K \cdot n \log^2(n).  \]
This proves the upper bound, with $C= 6K$.\\

The lower bound follows directly from Penner's lower bound. Note that for $n \geq 3$ we have
\[ n \leq \frac{g}{C \log^2(n)} <  \frac{g}{C}. \]
This implies 
\[\  l_{g,n} \geq \frac{\log(2)}{12g-12+4n} \geq \frac{\log(2)}{12g -12+ 4( \frac{g}{C}) } \geq \frac{B}{g},  \]
where $B = \frac{\log(2)}{12 + \frac{4}{C}}$.\\
\end{proof}

\begin{remark}
One can prove a similar but slightly weaker result without using Iwaniec's theorem. In this case, we define $\mathcal{S}_n$ as 
\[ \mathcal{S}_n := \{ \hspace{1mm}  a \in \mathbb{N} \hspace{1mm} | \hspace{1mm} a \geq n , \hspace{2mm} \gcd(n,a)=1\}. \]
Since any interval of length $n$ contains a number that is coprime to $n$, the ratio of any two consecutive elements of $\mathcal{S}_n$ is bounded above by $3$. Moreover if $a_1$ is the smallest element of $\mathcal{S}_n$ then $g_{n,a_1} \leq 6n^2$. Following the above proof, we obtain a similar result for $g \geq 6n^2$, and with universal constants $A$ and $ B$ that are constructive.
\end{remark}
We finish by mentioning that we do not know the answer to the following special case of the main question yet.
\begin{question}
Let $0<r_1 <r_2$ be constants. Determine the behaviour of $l_{g,n}$ on the two-dimensional cone $r_1  n < g< r_2  n$ as a function of two variables $g$ and $n$.
\end{question}

\bibliographystyle{plain}
\bibliography{references}

\vspace{4mm}

\noindent Mathematical Institute, University of Oxford, Oxford, UK \\
Email address: \textit{yazdi@maths.ox.ac.uk}

\end{document}